\numberwithin{equation}{section}
\newtheorem{lemma}{Lemma}[section]
\newtheorem{prop}[lemma]{Proposition}
\newtheorem{cor}[lemma]{Corollary}
\newtheorem{claim*}{Claim}
\newtheorem{thm}[lemma]{Theorem}
\newtheorem{question}[lemma]{Question}
\theoremstyle{definition}
\newtheorem{defn}[lemma]{Definition}
\newtheorem{example}[lemma]{Example}
\newtheorem{notation}[lemma]{Notation}
\theoremstyle{remark}
\newtheorem{remark}[lemma]{Remark}
\newcommand{\Tate}{{\mathbf{T}}}
\newcommand{\cC}{\mathcal{C}}
\newcommand{\m}{\mathfrak m}
\newcommand{\PP}{\mathbb P}
\newcommand{\bA}{\mathbb A}
\newcommand{\A}{\bA}
\newcommand{\ZZ}{\mathbb Z}
\newcommand{\QQ}{\mathbb Q}
\newcommand{\Proj}{\operatorname{Proj}}
\newcommand{\Tor}{\operatorname{Tor}}
\newcommand{\Hom}{\operatorname{Hom}} 
\newcommand{\cO}{{\mathcal O}}
\newcommand{\depth}{\operatorname{depth}}
\newcommand{\F}{\FF}
\newcommand{\defi}[1]{\textsf{#1}} 
\newcommand{\beq}{\begin{displaymath}}
\newcommand{\eeq}{\end{displaymath}}
\def\nc{\newcommand}
\def\on{\operatorname}
\nc{\Q}{\mathbb{Q}}
\nc{\RR}{\mathbf{R}}
\nc{\LL}{\mathbf{L}}
\nc{\xra}{\xrightarrow}
\nc{\xla}{\xleftarrow}
\def\om{\omega}
\def\DM{\operatorname{DM}}
\def\th{\on{th}}
\def\F{\mathcal{F}}
\def\l{\ell}
\nc{\into}{\hookrightarrow}
\nc{\onto}{\twoheadrightarrow}
\nc{\OO}{\mathcal{O}}
\nc{\Z}{\mathbb{Z}}
\nc{\cA}{\mathcal{A}}
\nc{\w}{\widehat}
\nc{\End}{\on{End}}
\nc{\res}{\frac{1}{x_0x_1}}
\nc{\tF}{\widetilde{F}}
\nc{\tG}{\widetilde{G}}
\nc{\tf}{\widetilde{f}}
\nc{\Com}{\on{Com}}
\nc{\G}{\mathbb{G}}
\nc{\cG}{\mathcal{G}}
\nc{\cE}{\mathcal E}
\nc{\cF}{\mathcal F}
\nc{\cR}{\mathcal R}
\nc{\cD}{\mathcal D}
\nc{\cB}{\mathcal B}
\nc{\cT}{\mathcal T}
\nc{\cL}{\mathcal L}
\nc{\bM}{\mathbf M}
\nc{\bN}{\mathbf N}
\nc{\U}{\mathbf U}
\nc{\BM}{\mathbf B \mathbf M}
\nc{\Dsg}{\on{D}_{\on{sg}}}
\nc{\fC}{\mathcal{C}}
\nc{\fG}{\mathcal{G}}
\nc{\N}{\mathbb{N}}
\nc{\del}{\partial}
\nc{\cone}{\on{cone}}
\nc{\D}{\on{D}_{\on{diff}}}
\nc{\DMb}{\on{D}^b_{\DM}}
\nc{\Db}{\on{D}^{\on{b}}}
\nc{\Kb}{\on{K}^{\on{b}}}
\nc{\fm}{\mathfrak{m}}
\nc{\Flag}{\on{Flag}}
\nc{\DMmin}{\DM_{\on{min}}}
\nc{\Ddiff}{\on{D}_{\on{diff}}}
\nc{\Dbdiff}{\on{D}^\on{b}_{\on{diff}}}
\nc{\wO}{\widehat{\OO}}
\nc{\wT}{\widehat{T}}
\nc{\from}{\leftarrow}
\nc{\wLL}{\widetilde{\LL}}
\nc{\augCech}{\widetilde{\cC}}
\nc{\Fold}{\on{Fold}}
\nc{\Ext}{\on{Ext}}
\nc{\FF}{\mathbf{F}}
\nc{\Comper}{\Com_{\on{per}}}
\nc{\Unfold}{\on{Unfold}}
\nc{\intHom}{\underline{\Hom}}
\nc{\Ex}{\on{Ex}}
\nc{\tg}{\widetilde{g}}
\nc{\B}{\mathcal{B}}
\nc{\K}{\mathcal{K}}
\nc{\kos}{\on{Kos}}
\nc{\Perf}{\on{Perf}}
\nc{\tR}{\widetilde{\cR}}
\nc{\X}{\mathcal{X}}
\nc{\Cl}{\on{Cl}}
\nc{\fU}{\mathcal{U}}
\nc{\bU}{\mathbf U}
\nc{\st}{\on{st}}
\nc{\coh}{\on{coh}}
\def\D{\mathcal{D}}
\nc{\tU}{\U}
\nc{\bC}{\mathbf{C}}
\nc{\aux}{\on{aux}}
\title{Positivity and nonstandard graded Betti numbers}
\thanks{The first author was supported by NSF-RTG grant 1502553.  The second author was supported by NSF grants 
DMS-1601619 and DMS-1902123.}
\author{Michael K. Brown}
\address{Department of Mathematics, Auburn University, Auburn, AL, 36849}
\email{mkb0096@auburn.edu}
\author{Daniel Erman}
\address{Department of Mathematics, University of Wisconsin, Madison, WI, 53706}
\email{derman@math.wisc.edu}
\subjclass[2020]{13D02, 13D45, 14M25}
\begin{document}

\maketitle

\begin{abstract}
A foundational principle in the study of modules over standard graded polynomial rings is that 
geometric positivity conditions imply vanishing of Betti numbers.  The main goal of this paper is to determine the extent to which this principle extends to the nonstandard $\Z$-graded case. In this setting, the classical arguments break down, and the results become much more nuanced. We 
introduce a new notion of Castelnuovo-Mumford regularity
 and 
 employ exterior algebra techniques to control the shapes of nonstandard $\Z$-graded minimal free resolutions. Our main result reveals a unique feature in the nonstandard $\Z$-graded case: the possible degrees of the syzygies of a graded module in this setting are controlled not only by its regularity, but also by its depth. As an application of our main result, we show that, given a simplicial projective toric variety and a module $M$ over its coordinate ring, the multigraded Betti numbers of $M$ are contained in a particular polytope when $M$ satisfies an appropriate positivity condition.
\end{abstract}

\section{Introduction}
The goal of this paper is to clarify some aspects of the relationship between regularity and syzygies in the case of a nonstandard $\ZZ$-grading.  We begin with two overarching questions:

\begin{question}\label{q:intro 1}   Consider a closed subvariety $X$ of a weighted projective space.  How does knowledge about vanishing of the sheaf cohomology of $X$ translate into bounds on the degrees of the defining equations of $X$?
\end{question}
\begin{question}\label{q:intro 2}
Consider a module $M$ over a $\ZZ$-graded polynomial ring.  What can we say about the Betti numbers of high degree truncations $M_{\geq r}(r)$ for $r\gg 0$?
\end{question}
In the standard graded case, both questions may be answered via the theory of Castelnuovo-Mumford regularity. For Question~\ref{q:intro 1}: if $H^i(\PP^n, \cO_X(r-i))=0$ for all $i>0$, then $X$ can be defined by equations of degree $\leq r+1$. For Question~\ref{q:intro 2}:  if $r$ is at least the regularity of $M$, then $M_{\geq r}$ has a linear free resolution.  For details, one can see~\cite{EG,laz,mumford} and more.  Yet neither question has a satisfying answer in the nonstandard $\ZZ$-graded case.

Benson introduced an analogue of Castelnuovo-Mumford regularity in the nonstandard $\ZZ$-graded case~\cite{benson}, which we will refer to as \defi{weighted regularity} to emphasize the distinction with the standard graded theory.\footnote{This is also a special case of the notion of multigraded regularity defined by Maclagan-Smith \cite{MS}.} While that notion has had tremendous applications in certain areas (see, for example, Symonds' work \cite{symonds}), it does not provide sharp answers to either of the above questions.  In short, there are some natural features of regularity in the standard graded case that are lacking in the weighted case.  
See, for instance, Remark~\ref{rmk:not sharp}.

We propose an alternate analogue of Castelnuovo-Mumford regularity---\defi{Koszul regularity}---that provides sharper answers to the above questions.  We do not suggest that this notion should supersede weighted regularity. In fact, a main theme from recent work on syzygies with nonstandard gradings, e.g.~\cite{BC, linear, Np, BES, BHS, BS, EES, HNV, MS, SV}, is that notions from the standard graded case can have several distinct nonstandard graded analogues, each of which is useful for different purposes.  For instance, several analogues of linear resolutions in the nonstandard $\Z$-graded setting play a key role in~\cite{Np}.  Our goal in this paper is to demonstrate how an alternate analogue of regularity---Koszul regularity---can provide sharper information in some contexts.

Let us set up our notation more precisely.  Given integers  $1 \le d_0 \le \cdots \le d_n$, we let $\mathbb P(\mathbf{d})=\mathbb P(d_0, \dots, d_n)$ denote the associated weighted projective space over a field $k$. Let $S=k[x_0, \dots, x_n]$ denote its Cox ring, where $\deg(x_i)=d_i$, and $\mathfrak m$ the homogeneous maximal ideal of $S$.  The following definition was introduced by Benson~\cite[\S5]{benson}:

\begin{defn}\label{defn:weighted reg}
We say that $M$ is \defi{weighted $r$-regular} if $H^i_{\m}(M)_j = 0$ for $i \ge 0$ and $j > r - i$. The \defi{weighted Castelnuovo-Mumford regularity of $M$} is the smallest $r$ such that $M$ is $r$-regular.
\end{defn}
Weighted regularity has had significant applications, for instance to group cohomology and invariant theory~\cite{benson, symonds,symonds2} as well as to our work on $N_p$-conditions in weighted projective spaces~\cite{Np}.   To define Koszul regularity, we need the following notation:

\begin{notation}
\label{wi}
Let $w^i$ (resp. $w_i$) be the sum of the $i$ largest (resp. smallest) degrees of the variables: that is, $w^i\coloneqq\sum_{j=n-i+1}^n d_j$ and $w_i\coloneqq\sum_{j=0}^{i-1} d_j$. By convention, $w_0 = 0 = w^0$, and  $w_{-1} =-1 = w^{-1}$.  If $K$ is the Koszul complex resolving the residue field of $S$, then $w^i$ (resp. $w_i$) is the maximal (resp. minimal) degree of a generator of $K_i$.
\end{notation}

\begin{defn}\label{defn:0 reg}
Let $M$ be a graded $S$-module. We say $M$ is \defi{Koszul $r$-regular} if $H^i_\fm (M)_d=0$ for all $d\geq r -w^{i-1}$. The \defi{Koszul regularity} of $M$ is the minimal $r$ such that $M$ is Koszul $r$-regular.
 \end{defn}

In the standard graded case, where each $d_i$ is 1, Definitions~\ref{defn:weighted reg} and~\ref{defn:0 reg} specialize to the standard definition of regularity.\footnote{See also \cite{MS19} for yet another distinct notion of regularity in the weighted case.}
In general, Koszul $r$-regularity is a stronger condition than weighted $r$-regularity. See Example~\ref{ex:comparison} for a comparison of these notions in a simple case. 

Our main result is the following, which uses the theory of Koszul regularity to convert cohomological vanishing conditions into vanishing results on Betti numbers.

\begin{thm}
\label{introweighted}
Let $M$ be a finitely generated, graded $S$-module with Betti numbers $\beta_{i,j}(M) = \Tor_i^S(M, k)_j$.   If $M$ is Koszul $r$-regular, then 
\[
\beta_{i,j}(M) = 0 \text{ for } j \ge r + w^{i+\depth(M)} - w^{\depth(M)-1}.
\]
\end{thm}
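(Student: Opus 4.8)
The plan is to reduce the statement about Betti numbers to a statement about local cohomology by resolving $M$ and tracking degrees through the resulting spectral sequence (or, equivalently, through an iterated cone construction), and to use the depth of $M$ to truncate the Koszul complex that computes the relevant $\Tor$. Recall that $\beta_{i,j}(M) = \dim_k \Tor_i^S(M,k)_j$, and $\Tor_i^S(M,k)$ can be computed as $H_i$ of $M \otimes_S K$, where $K$ is the Koszul complex on $x_0, \dots, x_n$ resolving $k$. Dually, one can compute these Betti numbers via $\Ext$ or via the minimal free resolution $F_\bullet \to M$; the degrees $j$ for which $\beta_{i,j} \ne 0$ are exactly the degrees of generators of $F_i$.

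\smallskip

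First I would set $c = \depth(M)$ and note that, by the Auslander--Buchsbaum formula, the minimal free resolution of $M$ has length $n+1-c$, so $\beta_{i,j}(M) = 0$ automatically for $i > n+1-c$; the content is a bound on the maximal degree of a generator of $F_i$ for $0 \le i \le n+1-c$. The key idea is that the top nonzero local cohomology of $M$ sits in cohomological degree $\le n+1$, but because $\depth M = c$, the local cohomology modules $H^i_\fm(M)$ vanish for $i < c$. So in the spectral sequence $H^p_\fm(F_q) \Rightarrow H^{p-q}_\fm(M)$ (equivalently, $\bigoplus_i \Ext^{n+1-i}_S(M, S(-w^{n+1}))^\vee$-type duality, i.e. graded local duality), only the rows $p \ge c$ and $p \le n+1$ contribute. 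Each $F_q$ is a sum of $S(-a)$'s with $a \le r + (\text{something})$, and $H^p_\fm(S(-a))$ is nonzero only in degrees $\le -w^{n+1} + a$ (it is the graded dual of $S(-w^{n+1}+a)$ in cohomological degree $n+1$), but I want the degrees appearing in $F_q$, which requires running the argument in the other direction.

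\smallskip

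Concretely, I would argue by a descending induction / truncation argument. Let $M' = \Syz^{0}(M) = M$ and form $M^{(i)} = \coker(F_{i+1} \to F_i)$, the $i$-th syzygy module, so that $\beta_{i,j}(M) = \beta_{0,j}(M^{(i)})$ and there are short exact sequences $0 \to M^{(i+1)} \to F_i \to M^{(i)} \to 0$. The local cohomology long exact sequences give, for each step, a bound relating $\reg$-type invariants of $M^{(i+1)}$ to those of $M^{(i)}$ and of $F_i$; since $F_i$ is free, $H^p_\fm(F_i)$ is concentrated in $p = n+1$ with top degree controlled by $w^{n+1}$ and the generator degrees of $F_i$. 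The crucial point is that $\depth M^{(i)} = \min(c + i, n+1)$, so $H^p_\fm(M^{(i)}) = 0$ for $p < c+i$; combined with Definition~\ref{defn:0 reg} this pins down exactly which shift of $w^\bullet$ enters at each stage, producing the term $w^{i + \depth(M)} - w^{\depth(M) - 1}$ after telescoping. A clean way to package this: apply graded local duality to convert $H^{i-1}_\fm(M)$-vanishing (Koszul $r$-regularity) into vanishing of $\Ext^{n+2-i}_S(M, S)$ in appropriate degrees, then use that $\Tor_i^S(M,k)$ is dual to $\Ext_S^i$ of the $k$-dual, keeping careful track that the canonical module contributes the shift $w^{n+1}$ and that depth cuts off the range of $i$. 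I would then verify the claimed inequality $j \ge r + w^{i+c} - w^{c-1}$ drops out by matching $w^{n+1} - w^{(n+1)-(i+c)} = w^{i+c}$ (the sum of the $i+c$ largest degrees) against the $w^{i-1}$-type shift in the definition of Koszul regularity.

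\smallskip

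\textbf{The main obstacle.} The hard part will be bookkeeping the interaction between the two ``ends'' of the Koszul complex: Definition~\ref{defn:0 reg} controls $H^i_\fm(M)$ with the shift $w^{i-1}$ (sums of \emph{largest} degrees), whereas the Betti bound involves $w^{i+\depth M} - w^{\depth M - 1}$, which mixes a ``large-degrees'' term and a ``small-degrees'' term. Making this precise requires being careful about which Koszul generators can actually contribute to a given syzygy once depth forces the bottom rows of the spectral sequence to vanish — naively one would get the weaker weighted-regularity bound, and the improvement to the stated bound is exactly where depth must be exploited. I expect the right technical device is to resolve $k$ not by the full Koszul complex but by a truncated/twisted version adapted to $\depth M$ (so that only $K_{\ge \text{something}}$ or $K_{\le \text{something}}$ interacts nontrivially with $M$), and to feed this into the exterior-algebra machinery alluded to in the abstract; verifying that the truncation is valid — i.e. that the discarded part of $K$ really does not affect $\Tor_i^S(M,k)$ in the relevant degrees — is the crux of the argument.
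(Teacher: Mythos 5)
Your proposal correctly identifies where the difficulty lives, but the devices you propose do not actually cross the gap. The entire content of the theorem is the implication ``local cohomology vanishing $\Rightarrow$ Betti number vanishing,'' and every route you sketch for that implication runs into the obstruction described in Remark~\ref{proofremark}. The syzygy-module induction ($0 \to M^{(i+1)} \to F_i \to M^{(i)} \to 0$ plus the local cohomology long exact sequence) lets you propagate cohomological information \emph{from} generator degrees \emph{to} local cohomology, but the step you need at each stage --- ``$M^{(i)}$ has controlled local cohomology, hence $F_i$ has controlled generator degrees'' --- is exactly the step that, in the standard graded case, is proved by slicing with a general linear non-zero-divisor and inducting on dimension. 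As the paper notes, homogeneous non-zero-divisors of small degree simply need not exist in the weighted setting, so this induction has no base mechanism. Your alternative packaging via graded local duality converts Koszul regularity into vanishing of $\Ext^{\bullet}_S(M,S)$ in certain degrees, but $\Ext^{\bullet}_S(M,S)$ does not determine $\Tor^S_{\bullet}(M,k)$ degree-by-degree unless $M$ is Cohen--Macaulay (where the dual of the resolution resolves $\Ext^{n+1-\dim M}_S(M,\omega_S)$); indeed the paper's Example~\ref{ex:same betti} shows two modules with identical Betti tables but different Koszul regularities, so no argument that only sees $\Ext_S(M,S)$ through its graded pieces can be reversed in general. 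This is why the converse direction, Theorem~\ref{technical weighted}(b), is stated only with the weaker constants $w_i$ and is sharp only for Cohen--Macaulay modules.

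The paper's actual proof is structurally different: after reducing to $H^0_\fm(M)=0$ (your Horseshoe-type reduction for the finite-length part is fine and matches the paper), it passes through the multigraded BGG correspondence. The point is that $\Tor^S_j(M,k)_a = H(\RR(M))_{(a;j)}$ for the differential $E$-module $\RR(M)$, and the Tate resolution exhibits $H(\RR(M))$ as generated by the image of a minimal free resolution $G \to \RR(M)$ over the exterior algebra $E$, whose generators are indexed by the local cohomology of $M$ --- so Koszul regularity directly bounds \emph{their} degrees --- while multiplication by elements of $E$ can only shift degrees within the window $[w_t, w^t]$. The depth of $M$ enters by killing the generators of $G$ corresponding to $H^{j+1}_\fm(M)$ for $j+1 < \depth(M)$, and the final bound $w^{i+\depth(M)}-w^{\depth(M)-1}$ comes from a purely combinatorial comparison of sums of the $d_c$'s. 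You gesture at ``exterior-algebra machinery'' and correctly flag that the crux is controlling which Koszul generators can contribute, but you do not supply the mechanism that makes this work; a truncation of the Koszul complex resolving $k$ on the symmetric-algebra side will not do it, because the issue is not which strands of $K$ you tensor with $M$ but how to translate cohomological vanishing into homological vanishing at all.
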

While weighted regularity provides a bound on the number of rows of the Betti table \cite[Proposition 1.2]{symonds2}, Theorem~\ref{introweighted} is often sharper if one is interested in bounds on specific Betti numbers: see, for instance, Corollaries~\ref{cor:defining equations} and \ref{cor:truncation}.

One unusual feature of Theorem \ref{introweighted} is its implication that the degrees of the syzygies of a module are governed not only by its regularity, but also by its \emph{depth}. This is invisible in the standard graded context, as $w^{i +\depth(M)} - w^{\depth(M)-1}$ is always $i + 1$ in that case, irrespective of $\depth(M)$.  Another consequence of Theorem \ref{introweighted} is that the bounds on Betti numbers shadow the degrees arising in the Koszul complex of the variables. This is in contrast with~\cite[Proposition 1.2]{symonds2}, which uses regularity to give a bound on the number of rows of the Betti table, but not a distinct bound for each homological degree.

When $S$ is standard graded, Theorem \ref{introweighted} precisely recovers one direction of the well-known equivalence between the local cohomology and free resolution definitions of regularity as in~\cite[Theorem 1.2(1)]{EG}.
But, for a general $\mathbf{d}$, the converse of Theorem \ref{introweighted} is simply false; in fact, one cannot determine Koszul regularity solely from the Betti table (see Example~\ref{ex:same betti}).  However, a partial converse does hold, where the integers $w^i$ are replaced by the integers $w_i$; see Theorem~\ref{technical weighted}(b).  The gap between the $w^i$ and the $w_i$ measures the degree to which the equivalence between the local cohomology and free resolution definitions of regularity gets distorted in the nonstandard $\ZZ$-graded case.

The classical argument of Eisenbud-Goto~\cite[Theorem 1.2(1)]{EG} that proves Theorem~\ref{introweighted} in the standard graded case simply does not extend to the weighted setting. The basic problem is that there are fewer homogeneous linear forms in the weighted case; see Remark~\ref{proofremark} for details. Our proof of Theorem \ref{introweighted} is therefore totally distinct from that of~\cite{EG}; we use exterior algebra methods, applying the Tate resolution technology developed in~\cite{tate}. Curiously, this flips a script from Eisenbud-Fl\o ystad-Schreyer's work~\cite{EFS}: 
 we use Tate resolutions on weighted spaces $\mathbb P(\mathbf d)$ to understand resolutions of truncations, whereas \cite{EFS} uses properties of truncations from~\cite{EG} to \emph{define} Tate resolutions on $\PP^n$.

Finally, Theorem \ref{introweighted} enables us to provide sharper answers to our initial Questions \ref{q:intro 1} and \ref{q:intro 2}.  For Question~\ref{q:intro 1}, we have:

\begin{cor}\label{cor:defining equations}
Let $X$ be a closed subvariety of the weighted projective space $\Proj(S)$ with defining ideal $I_X\subseteq S$.  If $S/I_X$ has Koszul regularity $r$, then $I_X$ is generated in degrees $<r+w^2$.
\end{cor}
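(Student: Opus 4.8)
The plan is to deduce this from Theorem~\ref{introweighted} applied to $M = S/I_X$ in homological degree $i = 1$, followed by an elementary inequality among the weights. We may assume that $X$ is a nonempty proper closed subvariety, so that $0 \neq I_X \neq S$; the remaining cases are vacuous or trivial. The first step is to note that the minimal generators of $I_X$ are detected by $\beta_{1,\bullet}(S/I_X)$: applying $\Tor_\bullet^S(-,k)$ to the short exact sequence $0 \to I_X \to S \to S/I_X \to 0$ and using that $I_X\subseteq \m$ (as $I_X$ is a proper homogeneous ideal) yields $\Tor_1^S(S/I_X,k) \cong I_X/\m I_X$, so $\beta_{1,j}(S/I_X)$ is exactly the number of degree-$j$ elements in a minimal homogeneous generating set of $I_X$. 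Hence it is enough to show that $\beta_{1,j}(S/I_X) = 0$ for all $j \geq r + w^2$.

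Next I would apply Theorem~\ref{introweighted} with $i = 1$. Writing $t \coloneqq \depth(S/I_X)$, Koszul $r$-regularity of $S/I_X$ gives $\beta_{1,j}(S/I_X) = 0$ for all $j \geq r + w^{1+t} - w^{t-1}$. So the corollary reduces to the inequality
\[
w^{1+t} - w^{t-1} \leq w^2,
\]
which I would verify by cases, keeping in mind that $0 \leq t \leq \dim(S/I_X) \leq n$ (the last step because $I_X \neq 0$) and that $d_0 \leq \cdots \leq d_n$. If $t \geq 2$, the left-hand side equals $d_{n-t} + d_{n-t+1} \leq d_{n-1} + d_n = w^2$. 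If $t = 1$, it equals $w^2 - w^0 = w^2$. If $t = 0$, the convention $w^{-1} = -1$ makes the left-hand side $w^1 + 1 = d_n + 1$, which is $\leq d_{n-1} + d_n = w^2$ since $d_{n-1} \geq d_0 \geq 1$. In all cases $r + w^{1+t} - w^{t-1} \leq r + w^2$, so $\beta_{1,j}(S/I_X) = 0$ for $j \geq r + w^2$, and therefore $I_X$ is generated in degrees $< r + w^2$.

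The whole argument is short and follows almost formally from Theorem~\ref{introweighted}, so I do not anticipate a substantial obstacle. The only points demanding a little care are the identification $\Tor_1^S(S/I_X,k) \cong I_X/\m I_X$ and, more importantly, the bookkeeping with $\depth(S/I_X)$ — in particular the boundary case $\depth(S/I_X) = 0$, where one must remember the convention $w^{-1} = -1$ from Notation~\ref{wi}.
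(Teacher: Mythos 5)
Your proof is correct and follows essentially the same route as the paper: apply Theorem~\ref{introweighted} with $i=1$ and check that $w^{1+\depth} - w^{\depth-1} \le w^2$. The only difference is that you also treat the case $\depth(S/I_X)=0$, which the paper sidesteps by observing that a defining ideal of a subvariety is saturated, so $S/I_X$ has depth at least $1$; your extra case is harmless and your inequality check there is valid.
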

Indeed, we obtain Corollary~\ref{cor:defining equations} by applying Theorem~\ref{introweighted} with $i = 1$ and observing that $w^{1+\depth(M)} - w^{\depth(M)-1} \le w^2$. As for Question~\ref{q:intro 2}, we prove the following:
\begin{cor}
\label{cor:truncation}
Let $M$ be a finitely generated, graded $S$-module. For any $r\gg 0$, the module $M_{\geq r}(r)$ is Koszul $0$-regular, and thus $\beta_{i,j}(M_{\geq r}(r))\ne 0$ only if $w_i \leq j <w^{i+1}$.
\end{cor}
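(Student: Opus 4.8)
The plan is to show first that $M_{\geq r}(r)$ is Koszul $0$-regular for $r\gg 0$, by comparing the local cohomology of $M_{\geq r}$ with that of $M$, and then to read off the vanishing of $\beta_{i,j}(M_{\geq r}(r))$ from Theorem~\ref{introweighted} for the upper bound on $j$ and from the fact that $M_{\geq r}(r)$ is concentrated in nonnegative degrees for the lower bound. We may assume $M_{\geq r}\neq 0$, since otherwise every assertion is vacuous.

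For the Koszul $0$-regularity: because $M$ is finitely generated, $M_{<r}$ has finite length, so $H^0_\fm(M_{<r})=M_{<r}$ and $H^i_\fm(M_{<r})=0$ for $i>0$. Applying the long exact sequence in local cohomology to $0\to M_{\geq r}\to M\to M_{<r}\to 0$ yields $H^0_\fm(M_{\geq r})=(H^0_\fm M)_{\geq r}$, isomorphisms $H^i_\fm(M_{\geq r})\isom H^i_\fm(M)$ for $i\geq 2$, and an identification $H^1_\fm(M_{\geq r})_e=H^1_\fm(M)_e$ for all $e\geq r$ (the discrepancy being a subquotient of $M_{<r}$, hence concentrated in degrees $<r$). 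Since each $H^i_\fm(M)$ is Artinian, it vanishes in all sufficiently high degrees; hence for $r\gg 0$ we get $H^0_\fm(M_{\geq r})=0$, $H^1_\fm(M_{\geq r})_e=0$ for $e\geq r$, and $H^i_\fm(M_{\geq r})_e=0$ for $e\geq r-w^{i-1}$ when $i\geq 2$. Twisting down by $r$, this says exactly that $H^i_\fm(M_{\geq r}(r))_d=0$ for all $i\geq 0$ and all $d\geq -w^{i-1}$, i.e.\ that $M_{\geq r}(r)$ is Koszul $0$-regular in the sense of Definition~\ref{defn:0 reg}. The only thing to be careful about is making the choice of $r\gg 0$ uniform in $i$ and aligning the twist correctly with the shift $-w^{i-1}$.

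Now apply Theorem~\ref{introweighted} to $M_{\geq r}(r)$, with the role of the parameter $r$ there played by $0$: setting $e=\depth(M_{\geq r}(r))$, which satisfies $e\geq 1$ since $H^0_\fm(M_{\geq r})=0$, we obtain $\beta_{i,j}(M_{\geq r}(r))=0$ for $j\geq w^{i+e}-w^{e-1}$. This is only relevant when $i+e\leq n+1$, since $\beta_{i,j}(M_{\geq r}(r))$ vanishes for larger $i$ by Auslander--Buchsbaum, so the term $w^{i+e}$ makes sense; and in that range $w^{i+e}-w^{e-1}$ is the sum of the $i+1$ degrees $d_{n-e+1},d_{n-e},\dots,d_{n-i-e+1}$, each of which is at most the corresponding entry of the list $d_n,d_{n-1},\dots,d_{n-i}$ of the $i+1$ largest degrees (using $e\geq 1$). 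Hence $w^{i+e}-w^{e-1}\leq w^{i+1}$, so $\beta_{i,j}(M_{\geq r}(r))=0$ for $j\geq w^{i+1}$. For the lower bound, since $M_{\geq r}(r)$ vanishes in negative degrees, computing $\Tor_i^S(M_{\geq r}(r),k)$ as the homology of $M_{\geq r}(r)\otimes_S K$ (the Koszul complex of Notation~\ref{wi}) shows that its degree-$j$ component is a subquotient of $\bigoplus_{|T|=i}\bigl(M_{\geq r}(r)\bigr)_{j-\sum_{t\in T}d_t}$, which is $0$ when $j<w_i=\min_{|T|=i}\sum_{t\in T}d_t$; thus $\beta_{i,j}(M_{\geq r}(r))=0$ for $j<w_i$. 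Combining the two bounds gives the asserted range $w_i\leq j<w^{i+1}$. The one genuinely substantive point in all of this is the combinatorial inequality $w^{i+e}-w^{e-1}\leq w^{i+1}$: it is precisely here that the depth-dependence built into Theorem~\ref{introweighted} collapses into the clean, depth-free interval $[w_i,w^{i+1})$.
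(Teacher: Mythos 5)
Your proof is correct, and its overall architecture matches the paper's: establish Koszul $0$-regularity of $M_{\geq r}(r)$ for $r\gg 0$, feed that into Theorem~\ref{introweighted}, and get the lower bound $w_i\leq j$ from the Koszul complex exactly as the paper does. The one genuinely different ingredient is your proof of the regularity of the truncation (the paper's Lemma~\ref{hightrunc}): the paper identifies $H^i_\fm(M_{\geq r}(r))_d$ with $H^i(X,\widetilde M(d+r))$ for $i>1$ and invokes Serre vanishing, while you run the long exact sequence of local cohomology for $0\to M_{\geq r}\to M\to M_{<r}\to 0$ and use that each $H^i_\fm(M)$ is Artinian, hence vanishes in high degrees. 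Your route is more self-contained and purely algebraic (no passage to the associated sheaf), at the cost of being slightly longer; the conclusions are identical. You also make explicit two points the paper compresses into ``follows immediately'': the combinatorial inequality $w^{i+e}-w^{e-1}\leq w^{i+1}$ for $e=\depth\geq 1$ (which is indeed where the depth-dependence of Theorem~\ref{introweighted} collapses to the depth-free interval), and the Auslander--Buchsbaum remark disposing of the range $i+e>n+1$ where $w^{i+e}$ is not meaningful. Both are verified correctly, so there is no gap.
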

In the classical setting, Corollary~\ref{cor:truncation} implies that $M_{\geq r}(r)$ has a linear resolution, as $w_i=i=w^i$ for all $i$.  Thus, the conditions in this corollary can be seen as providing a nonstandard graded analogue of a linear resolution; in fact this notion of a ``Koszul linear'' complex arises in~\cite{Np} in relation to $N_p$-conditions on weighted projective space, and it contrasts with the notion of strong linearity from~\cite[Definition 1.2]{linear}.

\begin{example}\label{ex:shift}
A natural question arising from Corollary~\ref{cor:truncation} is: where does the homological shift come from? That is: why is the upper bound for Betti numbers in homological degree $i$ given by $<w^{i+1}$, as opposed to $\leq w^i$?  The need for this shift can be seen via a simple example.  Let $S=k[x,y]$, where $\deg(x)=1$ and $\deg(y)=10$.  The degrees of the generators of the truncation $S_{\geq r}(r)$ depend on the remainder of $r$ divided by $10$.  For instance, $S_{\geq 1}(1)$ is generated by $x$ and $y$ in degrees $0$ and $9$, whereas $S_{\geq 7}(7)$ is generated by $x^7$ and $y$ in degree $0$ and $3$.  Thus, the generating degrees of $S_{\geq r}(r)$---i.e. the Betti numbers in homological degree 0---depend on the maximal degree of a variable, i.e. $w^1$.  The shift in Theorem~\ref{introweighted} is even more dramatic, depending as it does on the depth of the module.
\end{example}

\medskip 

Benson's definition of weighted regularity was a source of inspiration for Maclagan-Smith's work on multigraded regularity~\cite{MS}, as well as many followup results, e.g.~\cite{symonds, symonds2}.  It would be interesting to consider whether an analogue of Koszul regularity in the multigraded setting might also yield new results like Corollaries~\ref{cor:defining equations} and \ref{cor:truncation}.  In \S\ref{subsec:multigraded betti}, we pursue a related line of inquiry.  Specifically, we show how Theorem~\ref{introweighted} can be applied to the study of Betti numbers over the Cox rings of more general toric varieties, resulting in Theorem~\ref{introthm:truncated res general ALT}.

\subsection*{Acknowledgements} We thank Christine Berkesch, Juliette Bruce, David Eisenbud, Lauren Cranton Heller, Mahrud Sayrafi, Gregory G.~Smith, and Frank Olaf-Schreyer for valuable conversations.

\section{Background}
\label{background}

\subsection{Regularity and related notions}\label{subsec:regularity}
In this subsection, we provide background on the various flavors of regularity that appear in this paper, and we discuss some examples that clarify the distinctions between them.  First, it will be useful to recall the definition of regularity in the standard graded case:
\begin{defn} [The standard graded case]
\label{regdef}
Assume that $\deg(x_i) = 1$ for all $i$. 
Given $r \in \Z$, the following conditions on a finitely generated, graded $S$-module $M$ are equivalent:
\begin{enumerate}
\item $H^i_{\m}(M)_j = 0$ for $i \ge 0$ and $j > r - i$,
\item $\Tor_i^S(M, k)_j = 0$ for $i \ge 0$ and $j > r + i$. 
\end{enumerate}
We say $M$ is \defi{$r$-regular} if it satisfies these equivalent conditions. The \defi{Castelnuovo-Mumford regularity of $M$} is the smallest $r$ such that $M$ is $r$-regular.
\end{defn}

Let us now return to the nonstandard $\ZZ$-graded case and consider some results and examples to clarify the definitions of weighted regularity and Koszul regularity from the introduction.  For weighted regularity, Symonds proved the following:

\begin{prop}[\cite{symonds2} Proposition 1.2]\label{prop:symonds}
Let $\sigma= \sum_{i=0}^n (d_i-1)$, and let $M$ be a finitely generated, graded $S$-module. The module $M$ is weighted $r$-regular if and only if $\Tor_i(M,k)_j=0$ for all $j > r+i+\sigma$.
\end{prop}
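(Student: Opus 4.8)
The statement is due to Symonds; the route I would take is a degree-by-degree comparison of $\Tor_i^S(M,k)$ with the local cohomology modules $H^q_\m(M)$, which is the nonstandard-graded analogue of the classical equivalence recalled in Definition~\ref{regdef}. The one tool from the standard graded theory that is genuinely unavailable here is the Eisenbud--Goto reduction to truncations (there are too few linear forms; cf.\ Remark~\ref{proofremark}), so in place of it I would run a spectral-sequence argument. Set $w\coloneqq\sum_{i=0}^n d_i$, so that $w=w^{n+1}$ in the sense of Notation~\ref{wi} and $\sigma=w-(n+1)$, and write $t_i(M)\coloneqq\max\{\,j:\Tor_i^S(M,k)_j\neq0\,\}$.

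\emph{From $\Tor$-vanishing to weighted $r$-regularity.} Assuming $t_i(M)\le r+i+\sigma$ for all $i$, I would take the minimal graded free resolution $F_\bullet\to M$ and tensor it with the \v{C}ech complex on $x_0,\dots,x_n$, obtaining a bicomplex computing $\RR\Gamma_\m(M)$. Since each $F_p$ is free, $H^k_\m(F_p)=0$ for $k\neq n+1$, so the associated spectral sequence degenerates after one page and exhibits $H^q_\m(M)$ as a subquotient of $H^{n+1}_\m(F_{n+1-q})$. As $H^{n+1}_\m(S)$ is the graded $k$-dual of $\omega_S=S(-w)$, it is concentrated in degrees $\le-w$, so $H^{n+1}_\m(F_{n+1-q})$, being a sum of twists $H^{n+1}_\m(S)(-a)$ with $a\le t_{n+1-q}(M)$, is concentrated in degrees $\le t_{n+1-q}(M)-w\le(r+(n+1-q)+\sigma)-w=r-q$. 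Hence $H^q_\m(M)_d=0$ for $d>r-q$, i.e.\ $M$ is weighted $r$-regular. (Alternatively, one can run this step through graded local duality applied to the dual of $F_\bullet$.)

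\emph{From weighted $r$-regularity to $\Tor$-vanishing.} Here I would use the bicomplex $\Cech^\bullet\otimes_S K_\bullet\otimes_S M$, where $\Cech^\bullet$ is the \v{C}ech complex and $K_\bullet$ the Koszul complex on $x_0,\dots,x_n$ (a free resolution of $k$). Taking Koszul homology first produces $\Cech^q\otimes_S\Tor_p^S(M,k)$, which vanishes for $q\ge1$ since each $x_i$ annihilates $\Tor_p^S(M,k)$; so the total complex computes $\Tor_\bullet^S(M,k)$. Taking \v{C}ech cohomology first, and using that $K_p$ is free so that $H^q_\m(K_p\otimes_S M)=K_p\otimes_S H^q_\m(M)$, gives the $E_2$-page $\Tor_p^S(H^q_\m(M),k)$. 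Comparing the two computations, $\Tor_i^S(M,k)_j$ is a subquotient of $\bigoplus_{q\ge0}\Tor_{q+i}^S(H^q_\m(M),k)_j$. But $\Tor_{q+i}^S(H^q_\m(M),k)_j$ is itself a subquotient of $\bigoplus_{|I|=q+i}H^q_\m(M)_{j-\sum_{\ell\in I}d_\ell}$, and by weighted $r$-regularity each such summand vanishes once $j-\sum_{\ell\in I}d_\ell>r-q$; since the largest value of $\sum_{\ell\in I}d_\ell$ over $|I|=q+i$ is $w^{q+i}$ (Notation~\ref{wi}), this gives $\Tor_{q+i}^S(H^q_\m(M),k)_j=0$ for $j>r-q+w^{q+i}$. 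Therefore $\Tor_i^S(M,k)_j=0$ whenever $j>r-q+w^{q+i}$ for every $q$ with $0\le q+i\le n+1$. Writing $p=q+i$, one has $r-q+w^{q+i}=r+i+(w^p-p)$, and $w^p-p=\sum_{j=n+1-p}^n(d_j-1)$ is nondecreasing in $p$ with maximum $w^{n+1}-(n+1)=\sigma$, attained at $p=n+1$; so $\Tor_i^S(M,k)_j=0$ for $j>r+i+\sigma$, which is the converse.

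The step I expect to require the most care is the bookkeeping for the two spectral sequences of $\Cech^\bullet\otimes_S K_\bullet\otimes_S M$: identifying both $E_2$-pages, verifying convergence, and keeping track of internal degrees even though the modules $H^q_\m(M)$ are not finitely generated. The fact that legitimizes the degree estimates is that each $H^q_\m(M)$ is graded Artinian, hence bounded above in internal degree, so that vanishing of $H^q_\m(M)$ in high degrees genuinely propagates to $\Tor_{q+i}^S(H^q_\m(M),k)$. The only remaining ingredient is the elementary observation that the family of bounds $r+i+(w^p-p)$, $p\ge i$, collapses to the single constant $r+i+\sigma$ because $p\mapsto w^p-p$ is monotone and extremized at $p=n+1$ --- which is exactly the mechanism by which $\sigma$ enters the statement.
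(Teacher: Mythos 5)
The paper does not prove this proposition at all: it is quoted verbatim from Symonds \cite{symonds2}, so there is no internal argument to compare against. Your proof is correct and self-contained. Both halves check out: in the first direction, the collapse of the \v{C}ech--resolution bicomplex onto the row $q=n+1$ correctly exhibits $H^q_\m(M)$ as a subquotient of $H^{n+1}_\m(F_{n+1-q})$, and the degree count $t_{n+1-q}(M)-w\le r-q$ is exactly the identity $\sigma=w-(n+1)$; in the second direction, the bicomplex $\Cech^\bullet\otimes K_\bullet\otimes M$ is bounded, so both spectral sequences converge, the localization argument kills $\Cech^q\otimes\Tor_p(M,k)$ for $q\ge1$, and the final estimate rests on the (correct) monotonicity $w^{p+1}-(p+1)-(w^p-p)=d_{n-p}-1\ge0$, so that $w^p-p\le w^{n+1}-(n+1)=\sigma$. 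This is essentially the standard argument relating graded Betti numbers to local cohomology via the two hypercohomology spectral sequences, and it is close in spirit to Symonds' original proof; the only cosmetic caveat is that your closing remark about $H^q_\m(M)$ being Artinian is not actually needed, since the hypothesis of weighted $r$-regularity already supplies the degreewise vanishing you use.
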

Thus, weighted regularity measures the number of rows of the Betti table of $M$.

\begin{remark}\label{rmk:not sharp}
In the standard graded case, if $M$ is finite length and $r$-regular, then it is possible that $\Tor_i^S(M,k)_{r+i}\ne 0$ for any $0 \leq i\leq n+1$.  In other words, the bounds on $\Tor$ can be sharp in every degree.  However, for certain choices of $\mathbf{d}$, this can fail in the nonstandard graded case.  To take a simple example, let $S=k[x,y]$ with $\deg(x)=\deg(y)=3$, and consider a finite length module $M$ of weighted regularity $0$.  Consider its minimal free resolution:
$
F_0\gets F_1\gets F_2\gets 0.
$
Proposition~\ref{prop:symonds} implies that $F_1$ is generated in degrees $<1+\sigma = 5$ and that $F_2 $ is generated in degrees $<2+\sigma = 6$.  But since $F_2$ is nonzero and the degrees of the variables are $3$, we see that the highest allowable degree of a generator of $F_1$ is actually $6-3=3$.  In other words, for certain classes of modules, and intermediate homological degrees, the bounds from Proposition~\ref{prop:symonds} might always fail to be sharp.
\end{remark}

\begin{example} \label{ex:same betti}
Let $S=k[x_0,x_1]$ with degrees $1$ and $2$, and let $M=S(-1)/(x_{1}) \oplus S(-2)$.  Both $\mathfrak m$ and $M$ have the same Betti table:
\[
\beta(\mathfrak m) =
\footnotesize\begin{matrix}
         & 0 & 1   \\
      1: & 1 & .  \\
      2: & 1& 1 \\
      \end{matrix}
      = \beta(M)
      \normalsize
\]
Thus, they have the same weighted regularity; because $\sigma=1$ in this case, the weighted regularity is $1$. However, $\mathfrak m$ is Koszul $1$-regular, while $M$ is only Koszul $2$-regular; one readily sees this by applying Theorem~\ref{technical weighted} below, or by a direct calculation using Local Duality.  This shows that Koszul regularity cannot be detected solely from the Betti table, in general; however, Theorem~\ref{technical weighted} below implies that Koszul regularity \emph{can} be detected from the Betti table provided that the module is Cohen-Macaulay.
\end{example}

\begin{example}\label{ex:comparison}
We provide a quick comparison of weighted and Koszul regularity via a local cohomology computation.  Let $S=k[x_0,x_1,x_2,x_3]$ and $M=S/(x_2,x_3)$.  We note that $H^i_{\mathfrak m}(M)_j \ne 0$ if and only if $i=2$ and $j\leq -d_0 -d_1$.\footnote{The generator of $H^2_{\mathfrak m}(M)$ may be viewed as the monomial $\frac{1}{x_0x_1}$.}  Using this, we see that $M$ has weighted regularity $2-d_0-d_1$.   To compute Koszul regularity, we first note that $w^1=d_3$.  So the Koszul regularity of $M$ is the minimal $r$ such that $H^2_{\mathfrak m}(M)_j = 0$ for all $j\geq r - d_3$; that is, $r=d_3-d_0-d_1+1$.
\end{example}

\begin{remark}
One feature of Koszul regularity is that it is homogeneous in the following sense: if we rescale the degrees of the variables of $S$ by $\deg(x_i) \mapsto \lambda \deg(x_i)$, and we rescale the grading of an $S$-module $M$ by $\lambda$ as well, then the Koszul regularity of $M$ is also rescaled by $\lambda$.  This is not true for weighted regularity.
\end{remark}

\subsection{The multigraded BGG correspondence}
\label{BGG}
Let $E$ denote the $\Z \oplus \Z$-graded exterior algebra  $\Lambda_k(e_0, \dots, e_n)$ with $\deg(e_i) = (-\deg(x_i); -1)$. We let $\Com(S)$ denote the category of complexes of $\Z$-graded $S$-modules and $\DM(E)$ the category of \defi{differential $E$-modules}, i.e. $E$-modules $D$ equipped with a degree $(0; -1)$ endomorphism $\del$ such that $\del^2 = 0$. Given an object $D \in \DM(E)$, we let $H(D)$ denote its homology.

As proven by \cite{HHW}, there is a multigraded analogue of the Bernstein-Gel'fand-Gel'fand (BGG) correspondence that gives an adjunction
$$
\LL : \DM(E) \leftrightarrows \Com(S) : \RR.
$$
We refer the reader to \cite[\S 2]{tate} for a detailed introduction to the multigraded BGG correspondence. We will not be concerned with the functor $\LL$ in this paper, and we will only need the formula for $\RR(M)$ when $M$ is an $S$-module, which is given as follows. Let $\om_E$ denote the $E$-module $\underline{\Hom}_k(E, k)\cong E(-\sum_{i = 0}^n \deg(x_i); -n-1)$. The object $\RR(M) \in \DM(E)$ has underlying $E$-module $\bigoplus_{a \in \Z} M_a \otimes_k \om_E(-a; 0)$ and differential given by $m \otimes f \mapsto \sum_{i = 0}^n x_im\otimes e_if$.

A key point is that $\Z$-graded Betti numbers may be computed via BGG:

\begin{prop}[\cite{tate} Proposition 2.11(a)]
\label{tor} 
Let $M$ be an $S$-module. We have an identification $H(\RR(M))_{(a; j)} = \on{Tor}_j^S(M, k)_a$ of $\Z \oplus \Z$-graded $k$-vector spaces.
\end{prop}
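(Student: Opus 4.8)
The plan is to make the definition of $\RR(M)$ completely explicit on a $k$-basis and then recognize that, for each fixed value of the first ($\Z$-)grading, the resulting complex is literally the corresponding internal-degree strand of $M \otimes_S K$, where $K$ is the Koszul complex on $x_0, \dots, x_n$. Since $K$ is a $\Z$-graded free resolution of $k$ over $S$, its homology computes $\Tor^S(M, k)$, so a comparison of bidegrees will identify $H(\RR(M))_{(a;j)}$ with $\Tor_j^S(M, k)_a$.

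First I would recall the Koszul side. Write $e_I = e_{i_1} \wedge \dots \wedge e_{i_j}$ for a subset $I = \{i_1 < \dots < i_j\} \subseteq \{0, \dots, n\}$. Then $K_j = \bigoplus_{|I| = j} S(-\sum_{i \in I} d_i)$, with the summand indexed by $I$ generated by $e_I$ in internal degree $\sum_{i \in I} d_i$, and with differential $\partial(e_I) = \sum_t (-1)^{t-1} x_{i_t}\, e_{I \setminus \{i_t\}}$. Hence $\Tor_j^S(M, k)_a = H_j(M \otimes_S K)_a$, and $(M \otimes_S K_j)_a = \bigoplus_{|I| = j} M_{a - \sum_{i \in I} d_i}$.

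Next I would unwind $\RR(M)$. Let $\{\varphi_I\}_{I \subseteq \{0, \dots, n\}}$ be the $k$-basis of $\om_E = \underline{\Hom}_k(E, k)$ dual to the monomial basis $\{e_I\}$ of $E$. Since $e_I$ has bidegree $(-\sum_{i \in I} d_i;\, -|I|)$, the functional $\varphi_I$ has bidegree $(\sum_{i \in I} d_i;\, |I|)$ in $\om_E$, hence bidegree $(a + \sum_{i \in I} d_i;\, |I|)$ in the summand $M_a \otimes_k \om_E(-a; 0)$ of $\RR(M)$; so $m \otimes \varphi_I$ with $m \in M_a$ is homogeneous of that bidegree. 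Reading off the first coordinate gives $\RR(M)_{(a;j)} = \bigoplus_{|I| = j} M_{a - \sum_{i \in I} d_i}$, which is carried bijectively onto $(M \otimes_S K_j)_a$ by $m \otimes \varphi_I \mapsto m \otimes e_I$. Moreover the $E$-action on $\om_E$ is contraction, $e_i \cdot \varphi_I = (-1)^{t-1} \varphi_{I \setminus \{i_t\}}$ if $i = i_t \in I$ and $0$ otherwise, so the differential $m \otimes f \mapsto \sum_{i} x_i m \otimes e_i f$ of $\RR(M)$ reads $m \otimes \varphi_I \mapsto \sum_{i_t \in I} (-1)^{t-1} x_{i_t} m \otimes \varphi_{I \setminus \{i_t\}}$. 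This is exactly $\partial \otimes \id_M$ under the above bijection, so $(\RR(M)_{(a;\bullet)}, \del) \cong (M \otimes_S K)_a$ as complexes of $k$-vector spaces for every $a$. Passing to homology yields $H(\RR(M))_{(a;j)} = H_j(M \otimes_S K)_a = \Tor_j^S(M, k)_a$, compatibly with both gradings, which is the claim.

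The whole argument is essentially an unwinding of definitions; the only step requiring genuine care is the sign bookkeeping — reconciling the signs in the contraction action of $E$ on $\om_E$ (equivalently, in the Frobenius/Poincaré-duality isomorphism $\om_E \cong E(-\sum_{i=0}^n d_i; -n-1)$) with the Koszul sign convention for $\partial$. I regard this as the main obstacle, but it is purely a matter of fixing conventions: for any admissible convention, the differential obtained differs from the Koszul differential by a diagonal change of basis $\varphi_I \mapsto \pm \varphi_I$, which does not affect the computation of homology. One could also circumvent the issue entirely by first replacing $\om_E$ with $E(-\sum_i d_i; -n-1)$ via Poincaré duality, under which the contraction action becomes honest multiplication by the $e_i$, and then running the same degree count.
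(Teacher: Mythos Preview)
Your argument is correct and is essentially the standard proof of this fact: unwind the definition of $\RR(M)$, identify each strand $\RR(M)_{(a;\bullet)}$ with the degree-$a$ strand of $M \otimes_S K$ for $K$ the Koszul complex, and check that the differentials agree. The bidegree bookkeeping and the sign computation via the contraction action of $E$ on $\om_E$ are both carried out correctly.

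There is, however, nothing to compare against in this paper: the statement is quoted from \cite{tate} (Proposition 2.11(a)) and is not proved here. So the ``paper's own proof'' is a citation, and your write-up is exactly the sort of proof one expects to find behind that citation. If anything, your final paragraph is more cautious than necessary: with the conventions you actually fixed (ordered-index basis, dual basis for $\om_E$), the signs match on the nose, so the hedge about diagonal sign changes is not needed.
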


\subsection{Tate resolutions on weighted projective stacks}
\label{Tateback}

The BGG functor $\RR$ admits a geometric refinement: the \defi{Tate resolution functor} $\Tate\colon \coh(X) \to \DM(E)$. Tate resolutions over toric varieties/stacks are introduced in \cite[\S 3]{tate}, and we refer the reader there for a full introduction to the topic, and to~\cite{ABI,BE} for additional background on differential modules. Here, we briefly discuss Tate resolutions over weighted projective stacks.  The following result summarizes the key features of Tate resolutions we will need:

\begin{thm}[\cite{tate} Theorems 3.3 and 3.7]
\label{tatethm}
Let $\F$ be a coherent sheaf on the weighted projective stack $X = \mathcal{P}(d_0, \dots, d_n)$,  i.e. the stack quotient of $\A^{n+1} \setminus \{0\}$ by the action of the multiplicative group $k \setminus \{0\}$ given by $\lambda \cdot (x_0, \dots, x_n) = (\lambda^{d_0}x_0, \dots, \lambda^{d_n}x_n)$.
\begin{enumerate}
\item The Tate resolution $\Tate(\F)$ is an exact, minimal differential $E$-module such that $H^i(X, \F(j)) = \underline{\Hom}(k, \Tate(\F))_{(j; -i)}$.
\item Choose an $\m$-saturated $S$-module $M$ such that $\widetilde{M} =  \F$. The Tate resolution $\Tate(\F)$ is isomorphic to the mapping cone of a minimal free resolution $F \xra{\simeq} \RR(M)$ of the differential $E$-module $\RR(M)$.
\end{enumerate}
\end{thm}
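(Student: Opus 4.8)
\textbf{Proof proposal for Theorem~\ref{introweighted}.}

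The plan is to translate the Koszul-regularity hypothesis on $M$ into a vanishing statement about the differential $E$-module $\RR(M)$, and then use minimality of the Tate resolution together with the mapping cone description in Theorem~\ref{tatethm}(2) to read off the vanishing of the Betti numbers $\beta_{i,j}(M)$. First I would replace $M$ by a convenient cohomological model: by Proposition~\ref{tor} the Betti numbers of $M$ are the homology of $\RR(M)$, so everything is computed inside $\DM(E)$. The key bookkeeping step is to unwind Definition~\ref{defn:0 reg}: Koszul $r$-regularity says $H^i_\fm(M)_d = 0$ for $d \ge r - w^{i-1}$ for all $i$. Using the local duality / local cohomology description of the graded pieces of $\RR(M)$ (equivalently, the interpretation of $\underline{\Hom}(k,\Tate(\widetilde M))$ via sheaf cohomology in Theorem~\ref{tatethm}(1), combined with the local cohomology exact sequences relating $M$, $\widetilde M$, and $H^0_\fm(M), H^1_\fm(M)$), I would show that the free summands of the Tate resolution $\Tate(\widetilde M)$, in the ``strand'' that detects $\Tor$ of $M$, are concentrated in degrees constrained by $r$ and the $w^i$.

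Next I would bring in the depth. The point where $\depth(M)$ enters is the position in the Tate resolution where the ``$\RR(M)$ part'' of the mapping cone in Theorem~\ref{tatethm}(2) stops contributing: since $\Tor_i^S(M,k) = 0$ for $i > \pdim(M) = n+1-\depth(M)$ by Auslander--Buchsbaum, the resolution $F \xra{\simeq} \RR(M)$ has a bounded tail, and the interesting homology lives in homological degrees $0$ through $\pdim(M)$. The free module $K_i$ in the Koszul complex $K$ of the variables has its generators in degrees between $w_i$ and $w^i$, and the Tate resolution is built by successively adjoining such Koszul-type pieces (this is the content of minimality plus the structure of $\RR(k)$); so the degree in which a generator can appear in homological position $i + \depth(M)$ away from the ``seam'' of the mapping cone is shifted by exactly $w^{i+\depth(M)} - w^{\depth(M)-1}$ relative to the seam, which is placed at degree $r$ by the regularity hypothesis. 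Concretely, I expect the argument to run: (i) the seam of the cone sits at homological degree $\depth(M)$ because $\underline{\Hom}(k, \Tate(\widetilde M))$ in cohomological degrees $> -\depth(M)$ is forced to vanish below the regularity line; (ii) minimality of the Tate resolution forces each subsequent free generator to increase in internal degree by at least the relevant $w^\bullet$ increment; (iii) adding these increments from the seam out to homological degree $i + \depth(M)$ yields the bound $j \ge r + w^{i+\depth(M)} - w^{\depth(M)-1}$ as the first degree forced to vanish.

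I would organize the write-up as: a lemma identifying, under Koszul $r$-regularity, the graded degrees in which $\Tate(\widetilde M)$ is supported in each cohomological degree (this is the ``exterior-algebra reformulation of regularity''); a second lemma tracking how minimality propagates a degree bound along the Koszul-type free modules of the Tate resolution, with the $w^i$ appearing because $K_i$ is generated in degrees $\le w^i$; and finally the assembly, using Theorem~\ref{tatethm}(2) to identify $\Tor_i^S(M,k)$ with a homology group of the cone and using Auslander--Buchsbaum to locate the seam at homological degree $\depth(M)$. The main obstacle, and the step I expect to require the most care, is step (ii): making precise the claim that minimality of the differential $E$-module forces the predicted degree jumps, since $\Tate$ is a differential module rather than an ordinary complex and one must argue that no ``cancellation'' can occur that would let a generator appear in a lower degree than the Koszul-increment bound allows. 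I would handle this by reducing to the free resolution $F \xra{\simeq} \RR(M)$ of Theorem~\ref{tatethm}(2), where minimality has its usual meaning (differential with entries in $\fm_E$), and then arguing degree by degree using the explicit differential of $\RR(M)$, namely $m \otimes f \mapsto \sum_i x_i m \otimes e_i f$, whose matrix entries have degrees exactly $d_i$; the lower bound on the degree of a new generator in homological step $i+1$ versus step $i$ is then at least $d_{i+1}$-type, and summing gives $w^{i+\depth(M)}$ against the shift $w^{\depth(M)-1}$ coming from the placement of the seam.
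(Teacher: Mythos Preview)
Your overall framework matches the paper's: compute Betti numbers as homology of $\RR(M)$ via Proposition~\ref{tor}, invoke the mapping-cone description from Theorem~\ref{tatethm}(2), and bound degrees. But two of your key steps are mis-aimed. First, the role of depth: you locate the ``seam'' via Auslander--Buchsbaum and $\pdim(M)$, but that is not how depth enters. The paper uses the \emph{other} side of depth, namely $H^i_\fm(M)=0$ for $i<\depth(M)$. After first reducing to $H^0_\fm(M)=0$ (a step you omit, but which is needed for Theorem~\ref{tatethm}(2) to apply; the paper handles $H^0_\fm(M)$ separately via the Horseshoe Lemma and the Koszul resolution of $k$), a summand $\omega_E(-a;j+1)$ appears in the minimal free resolution $G\to\RR(M)$ only when $H^{j+1}_\fm(M)_a\ne 0$. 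Koszul $0$-regularity gives $a<-w^j$, and depth forces $j\ge\depth(M)-1$. Translating to the generator's bidegree $(\ell;i)=(w+a;\,n-j)$ gives $\ell<w_{i+1}$ and $i\le n-\depth(M)+1$; these two inequalities are the entire input from the hypothesis.

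Second, your step (ii) is both unnecessary and confused as written. You plan to ``propagate'' a degree bound along the resolution by summing minimality-forced increments, invoking the explicit differential of $\RR(M)$; but that differential is not the differential of $G$, and no induction along $G$ is needed. The paper's argument is direct: since $\epsilon\colon G\to\RR(M)$ is a quasi-isomorphism, every class in $H(\RR(M))$ is represented by $f\epsilon(\tau)$ with $\tau$ a generator of $G$ of degree $(\ell;i)$ and $f\in E$ of degree $(-m;-t)$. Then $\ell-m<w_{i+1}-w_t=\sum_{c=t}^{i}d_c$, and because $i\le n-\depth(M)+1$ one may slide this window to the top of the ordered list $d_0\le\cdots\le d_n$ to get $\sum_{c=t}^{i}d_c\le w^{\,i-t+\depth(M)}-w^{\depth(M)-1}$. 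Replacing your propagation lemma with this one-line inequality, and using the correct depth input above, gives the proof.
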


See \cite[Appendix B]{tate} for background
 on differential $E$-modules and
 \cite[Examples 3.11 - 3.13]{tate} for examples of Tate resolutions over weighted projective stacks. 

\begin{remark}
\label{quasicoh}
The coherence assumption on $\F$ in Theorem \ref{tatethm} can be loosened. Indeed, the general construction of Tate resolutions on projective toric stacks in \cite[\S 3.2]{tate} makes sense even for quasi-coherent sheaves, and the proof of \cite[Theorem 3.3]{tate} works verbatim at this level of generality, so Theorem \ref{tatethm}(1) holds for any quasi-coherent sheaf. Additionally, if $\F$ is a quasi-coherent sheaf on $X$ satisfying
\begin{enumerate}
\item $\F = \widetilde{M}$ for some $S$-module $M$ with $H^0_\fm(M) = 0$, and 
\item there exists $N \gg 0$ such that $H^i_\fm(M)_d = 0$ for all $i > 0$ and $d \ge N$;
\end{enumerate}
then the proof of \cite[Theorem 3.7]{tate} works essentially verbatim as well, and so Theorem~\ref{tatethm}(2) also holds in this more general setting. We use this in the proof of Theorem~\ref{technical weighted}.
\end{remark}

\section{Proof of Theorem \ref{introweighted}}
\label{sec:regweighted}
We will prove the following strengthened version of Theorem~\ref{introweighted}:

\begin{thm}
\label{technical weighted}
Let $k$ be a field, and let $S = k[x_0, \dots, x_n]$, $\Z$-graded so that $d_i \coloneqq \deg(x_i) \ge 1$ for all $i$. Let $M$ be a graded $S$-module. \begin{enumerate}
\item[(a)] If $M$ is $r$-Koszul regular, and $H^0_\fm(M)_j = 0$ for $j \ll 0$, then $\Tor_i^S(M, k)_j = 0$ for $j \ge r + w^{i+\depth(M)} - w^{\depth(M)-1}$.
\item[(b)] Suppose $M$ is finitely generated. If $M$ is Cohen-Macaulay, then the converse of (a) holds. In general, if $\Tor_i^S(M, k)_j = 0$ for $j \ge r + w^{i + 1}$ (so, for instance,  if $\Tor_i^S(M, k)_j = 0$ for $j \ge r + w^{i+\depth(M)} - w^{\depth(M)-1}$), then $H^i_\fm (M)_d=0$ for all $d\geq r -w_{i-1}$.
\end{enumerate}
\end{thm}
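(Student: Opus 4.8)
The plan is to deduce both parts from the structure of the Tate resolution $\Tate(\widetilde M)$, using Theorem~\ref{tatethm}(2) (in the slightly generalized form of Remark~\ref{quasicoh}) to relate the minimal free resolution of $\RR(M)$ — whose homology computes the $\Z$-graded Betti numbers by Proposition~\ref{tor} — to the cohomology of $\widetilde M(j)$, which is controlled by Koszul regularity. First I would reduce part (a) to the case where $M$ is $\m$-saturated with $H^0_\fm(M)=0$. Since $H^0_\fm(M)_j=0$ for $j\ll 0$ by hypothesis, $H^0_\fm(M)$ is a finite-length module concentrated in nonnegative degrees up to some bound, so modding it out changes neither the Betti numbers in the relevant (high) degrees nor the Koszul $r$-regularity, and it only drops the depth if the depth was $0$ — a case one checks separately, where $\depth(M)=0$ forces $w^{i+\depth(M)}-w^{\depth(M)-1}=w^i-w^{-1}=w^i+1$ and the bound reads $j\ge r+w^i+1$, which is exactly what Koszul $r$-regularity gives directly via BGG since then $\RR(M)$ already resolves $\widetilde M$ off a finite piece. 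For the main case, set $c=\depth(M)$; replacing $M$ by a sufficiently high truncation (Corollary~\ref{cor:truncation}-style reductions are not needed, but truncation preserves the statement) one arranges that $M$ satisfies conditions (1)–(2) of Remark~\ref{quasicoh}.

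Next, the heart of part (a). By Theorem~\ref{tatethm}(2), $\Tate(\widetilde M)$ is the mapping cone of a minimal free resolution $F\xra{\simeq}\RR(M)$, so $\Tor^S_j(M,k)_a$ injects into the $E$-module underlying $\Tate(\widetilde M)$ in degree $(a;-j)$-ish (tracking the homological/internal bookkeeping carefully). The free summands of $\Tate(\widetilde M)$ in a given internal degree are controlled by $\underline{\Hom}(k,\Tate(\widetilde M))_{(j;-i)}=H^i(X,\widetilde M(j))$ by Theorem~\ref{tatethm}(1), and $H^i(X,\widetilde M(j))\cong H^{i+1}_\fm(M)_j$ for $i\ge 1$ (and the $i=0$ piece is handled by saturation). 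Koszul $r$-regularity says $H^{i+1}_\fm(M)_d=0$ for $d\ge r-w^{i}$. Now the key numerical input: a nonzero $\Tor_i^S(M,k)_j$ contributes a free generator of $E$ in a slot that, after passing through $\om_E=E(-\sum d_\ell;-n-1)$ and the $c$ steps of homological shift coming from $\depth(M)=c$ (equivalently, from the fact that the first $c$ local cohomology modules vanish so the Tate resolution only ``turns on'' cohomologically at step $c+1$ relative to $\RR(M)$), forces a nonvanishing cohomology group $H^{i+c}_\fm(M)$ in internal degree $\ge j-w^{i+c}+w^{c-1}$; comparing with the Koszul-regularity vanishing $H^{i+c}_\fm(M)_d=0$ for $d\ge r-w^{i+c-1}$ yields $j< r+w^{i+c}-w^{c-1}$, which is the claimed bound $\beta_{i,j}(M)=0$ for $j\ge r+w^{i+\depth M}-w^{\depth M-1}$.

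For part (b), the Cohen-Macaulay case is the cleanest: when $M$ is CM of depth (= dimension) $c$, the Tate resolution is built from a single nonzero local cohomology module $H^c_\fm(M)$, so the two-way dictionary between the minimal free complex computing $\Tor$ and the local cohomology becomes an exact correspondence, and running the argument of (a) in reverse — reading off $H^i_\fm(M)_d$ from the $\Tor$-generators in the sole homological strand — gives the converse, with the $w^i$ now appearing symmetrically. In general, without CM-ness the Tate resolution mixes several strands and one only gets an inequality in one direction; here I would show that the hypothesis $\Tor_i^S(M,k)_j=0$ for $j\ge r+w^{i+1}$ forces $H^i_\fm(M)_d=0$ for $d\ge r-w_{i-1}$ by a downward induction on $i$ (top local cohomology first), at each stage using the minimal free resolution of $\RR(M)$: the generators of $\Tate(\widetilde M)$ that could detect a nonzero class of $H^i_\fm(M)$ in degree $d\ge r-w_{i-1}$ would have to come from a $\Tor$-generator in an internal degree that the hypothesis has ruled out, because the \emph{smallest} degree shift introduced by the exterior variables in $i$ homological steps is $w_{i-1}$ (as recorded in Notation~\ref{wi}), not $w^{i-1}$. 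The parenthetical implication is immediate since $w^{i+1}\le w^{i+\depth M}-w^{\depth M-1}$ whenever $\depth M\ge 1$, and the $\depth M=0$ case is absorbed into part (a)'s reduction.

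The main obstacle I anticipate is the precise bookkeeping in the second paragraph: correctly tracking how the $\Z\oplus\Z$-grading on $E$, the twist by $\om_E$, and the $\depth(M)$-step delay between $\RR(M)$ and the fully exact $\Tate(\widetilde M)$ combine to produce exactly the shift $w^{i+\depth M}-w^{\depth M-1}$ rather than something off by the degree of a variable. Getting the ``delay by $\depth$'' claim rigorous — i.e. that the free resolution $F\to\RR(M)$ has its first $\depth(M)$ steps forced by the vanishing of the low local cohomology, so that the $\Tor$-generators only begin interacting with $H^{\ge\depth M}_\fm$ — is where the depth-dependence of the theorem genuinely enters, and it is the part of the argument I would write out most carefully, likely via a spectral sequence or a direct filtration of $\Tate(\widetilde M)$ by cohomological degree.
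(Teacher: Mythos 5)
Your overall strategy for part (a) --- push the Betti numbers through $\RR(M)$ and the Tate resolution, and let Koszul regularity constrain the minimal free resolution $\epsilon\colon G \to \RR(M)$ --- is the paper's approach in outline, but the quantitative core is missing and the numerology you supply does not close. The mechanism is not that ``a nonzero $\Tor_i^S(M,k)_j$ contributes a free generator \dots forcing a nonvanishing $H^{i+c}_\fm(M)$''; it is the reverse: since $\epsilon$ is a quasi-isomorphism, every class in $H(\RR(M))$ is a sum of elements $f\epsilon(\tau)$ with $\tau$ a generator of $G$ and $f \in E$. Regularity plus the twist by $\om_E$ bounds the degree $(\l;i)$ of each $\tau$ by $\l < w_{i+1}$ (the \emph{lower} $w$'s), the degree $(-m;-t)$ of $f$ satisfies $-m \le -w_t$, and one then needs the combinatorial inequality $w_{i+1}-w_t=\sum_{c=t}^{i}d_c \le w^{\,i-t+\depth M}-w^{\,\depth M-1}$, which holds only because $H^{j+1}_\fm(M)\ne 0$ forces $j+1\ge \depth M$ and hence $i=n-j\le n-\depth M+1$. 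That conversion from sums of small $d$'s to differences of large $d$'s is exactly where the depth enters, and it is absent from your sketch. Worse, your stated comparison (nonvanishing of $H^{i+c}_\fm(M)$ in degree $\ge j-w^{i+c}+w^{c-1}$ against vanishing for $d\ge r-w^{i+c-1}$) actually yields $j< r+w^{i+c}-w^{c-1}-w^{i+c-1}$, a bound strictly stronger than the theorem; it is violated already by $\fm\subseteq k[x_0,x_1,x_2]$ with degrees $2,3,5$ (Example~\ref{ex:235}, where $\beta_{2,10}\ne 0$), so the premise you are comparing against cannot be correct as stated.

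For part (b) you propose a downward induction through the Tate resolution, which you do not carry out; the paper instead uses a short local duality argument: $H^i_\fm(M)=\Ext^{n+1-i}_S(M,S(-w))^*$ exhibits $H^i_\fm(M)_d$ as a subquotient of $\bigoplus_j (S_{j-w-d})^{\beta_{n+1-i,j}}$, and the hypothesis on the Betti numbers kills every summand in the stated range of $d$ (this is the ``only if'' direction of Eisenbud--Goto, which does generalize; see Remark~\ref{proofremark}). Your route would require showing that a nonzero $H^i_\fm(M)_d$ forces a nonzero Betti number in a controlled degree, which is precisely the implication left unjustified in your part (a). Two further slips: the reduction to the saturated case cannot just assert that ``Koszul regularity gives the depth-$0$ bound directly via BGG'' --- the paper filters $H^0_\fm(M)$ by its graded pieces and uses that each $k(-d)$ with $d\le 0$ is resolved by a twisted Koszul complex, together with the Horseshoe Lemma; and the parenthetical implication in (b) holds because $w^{\,i+\depth M}-w^{\,\depth M-1}\le w^{\,i+1}$ (so vanishing for $j\ge r+w^{\,i+\depth M}-w^{\,\depth M-1}$ is the \emph{stronger} hypothesis), not the reversed inequality you state.
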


Let us briefly sketch the ideas that led us to this result.  Let $M$ be Koszul $0$-regular and generated in degree $0$.  Recall that the Tate resolution of the sheaf associated to $M$ is an exact, bigraded differential module over an exterior algebra: under certain conditions, it is the cone of a free resolution of the form
$
G \overset{\epsilon}{\longrightarrow} \RR(M)
$ (see Theorem~\ref{tatethm} and Remark~\ref{quasicoh}).  The Koszul $0$-regularity of $M$ constrains the degrees of the generators of $G$.  This also constrains the degrees of the image of $\epsilon$; since the homology of $\RR(M)$ encodes $\Tor^S_*(M,k)_*$, this in turn bounds the Betti numbers of $M$.  The appearance of the integers $w^i$ and $w_i$ in the Theorem arise from working over the exterior algebra. Our actual proof is based on this basic idea, but it requires some rather technical bookkeeping.

\begin{proof}[Proof of Theorem \ref{technical weighted}]
Twisting $M$ appropriately, we may assume $r = 0$. Let us prove (a). 
By the Horseshoe Lemma applied to 
$
0\to H^0_\fm( M) \to M \to M/H^0_\fm (M)\to 0,
$
it suffices to prove the statement for $H^0_\fm (M)$ and in the case where $H^0_\fm (M)=0$.  Our regularity assumption implies that $H^0_\fm (M)$ has a maximal degree $d$ such that $H^0_\fm (M)_d \ne 0$; by our convention $w^{-1}=-1$, we have $d \le 0$. We have a short exact sequence
$
0\to H^0_\fm (M)_d \to H^0_\fm (M) \to N \to 0.
$
Since $k(-d)$ is resolved by the Koszul complex twisted by $-d$, the statement holds for the minimal free resolution of $H^0_\fm (M)_d$, because $\beta_{i,j}(k) =0$ for $j>w^i$. We now apply the same argument to $N$; since  $H^0_\fm (M)$ has a minimal degree where it is nonzero, this process eventually terminates.
We may therefore assume that $H^0_\fm (M) =0$. 

By Lemma~\ref{tor}, the Betti numbers of $M$ are encoded by the homology of $\RR (M)$; it thus suffices to prove that 
\[
H ( \RR (M))_{(a;j)} \ne 0 \text{ only if } a < w^{j+\depth(M)} - w^{\depth(M)-1}.
\]

By Theorem \ref{tatethm} (and Remark \ref{quasicoh})\footnote{Our regularity assumption on $M$ implies that the condition in Remark \ref{quasicoh}(2) is satisfied.}, the Tate resolution of $\widetilde{M}$ is isomorphic to the mapping cone of a minimal free resolution $\epsilon\colon G \xra{\simeq} \RR (M)$, and the generators of $G$ are in bijection with sheaf cohomology groups of $\widetilde{M}$. Observe that $\omega_E(-a; j+1)$ is a summand of $G$ only if $H^{j+1}_\fm (M)_a\ne 0$. By the regularity assumption on $M$, we have $a<-w^j$ in this case. The generator of $\omega_E$ has degree $(w;n+1)$, and so the generator of $\omega_E(-a;j+1)$ has degree $(w+a; n-j)$.  Applying the inequality $a<-w^j$, we get:
\[
\l \coloneqq w+a < w - w^j = w_{n+1-j}.
\]
Setting $i=n-j$, we arrive at the following key point: every generator $\tau$ of $G$ of degree $(\l;i)$ satisfies $\l<w_{i+1}$.   We remark, for use in a moment, that $j\geq \depth(M)-1$. 

Every class in $H(\RR (M))$ may be represented by an element in the image of $\epsilon$; in particular, we can write every element in $H (\RR (M))$ as a sum of elements of the form $f \cdot \epsilon(\tau)$, where $\tau$ is a generator of $G$, and $f\in E$. Say $\deg(\tau) = (\l ; i)$ and $\deg(f) = (-m; -t)$. Since $\RR (M)$ has no elements of degree $(u;v)$ with $v < 0$, the same is true for $H (\RR (M))$. 
We therefore have  $-i \le -t \le 0$. 
Since $f\tau$ has degree $(\ell - m; i-t)$, our goal is to show that $\l-m < w^{i-t+\depth(M)} - w^{\depth(M)-1}$. The maximum possible value for $-m$ is $-w_t$.   Since $\tau$ is a generator of degree $(\l;i)$, the argument in the previous paragraph implies that $\l <w_{i+1}$.  We now compute:
\[
\l-m  <w_{i+1} - w_t= \sum_{c=0}^{i} d_c -\sum_{c=0}^{t-1} d_c=\sum_{c = t}^{i} d_c.
\]
Since $j\geq \depth(M)-1$, we have $i=n-j \leq n-\depth(M) +1$. Moreover, we have: 
\begin{align*}
\sum_{c = t}^{i} d_c \leq \sum_{c = t+1}^{i+1} d_c \leq \cdots &\leq \sum_{c= {n-(i-t)-\depth(M)+1}
}^{n-\depth(M)+1} d_c\\
&=\sum_{c=n-(i-t)-\depth(M)+1}^n d_c- \sum_{c=n-\depth(M)+2}^n d_c \\
&=w^{i-t+\depth(M)} - w^{\depth(M)-1}.
\end{align*}
Thus, $\l-m < w^{i-t+\depth(M)} - w^{\depth(M)-1}$, which is what we wanted to show.

As for (b): by Grothendieck vanishing, we may assume $i \ge \depth(M)$. By Local Duality, we have
$H^i_\fm(M) = \Ext^{n+1-i}_S(M, S(-w))^*$, where $w \coloneqq w^{n+1}$. Thus, $H^i_\fm(M)_d$ is a subquotient of $\bigoplus_{j \in \Z} (S(j - w)^*)_d^{ \beta_{n + 1 - i, j}}= \bigoplus_{j \in \Z} (S_{j - w - d})^{ \beta_{n + 1 - i, j}}.$
By hypothesis, the $j^{\th}$ summand vanishes unless $j < w^{n + 1 - i + \depth(M)}- w^{\depth(M) - 1}$, and so we assume this inequality holds.

Now, assume $M$ is Cohen-Macaulay. Again by Grothendieck vanishing, we may assume $i = \depth(M)$, in which case $j < w - w^{\depth(M) - 1}$. Thus, when $d \ge -w^{\depth(M)-1}$, we have $j -w - d< 0$, and so $H^{\depth(M)}_\fm(M)_d = 0$. It follows that $M$ is 0-Koszul regular. 

In general, when $d \ge -w_{i -1}$, we have
$$
j - w - d  < w^{n + 1 - i + \depth(M)}- w^{\depth(M) - 1} - w + w_{i - 1}\le w^{n + 2 -i }  - w + w_{i - 1} = 0.
$$
Thus, $H^i_\fm(M)_d = 0$.
\end{proof}

\begin{remark}
\label{proofremark}
The proof of Theorem \ref{introweighted}(b) is virtually identical to that of the ``only if" direction of Eisenbud-Goto's Theorem \cite[Theorem 1.2(1)]{EG}. However, we emphasize that the proof of the ``if" direction of Eisenbud-Goto's Theorem does \emph{not} generalize to the weighted setting, and so our approach to proving Theorem \ref{introweighted}(a) is radically different from that of \cite{EG}. Indeed, the proof of the ``if" direction of \cite[Theorem 1.2(1)]{EG} makes crucial use of the fact that, if $M$ is a finitely generated module over a standard graded polynomial ring with positive depth, and the ground field is infinite, then there exists a homogeneous linear form $\l$ such that $\l$ acts as a non-zero-divisor on $M$.
This is false in our context: for example, say $X = \PP(2,3,5)$, and let $M = S / I$, where $I = (x_0,x_1) \cap (x_0,x_2) \cap (x_1,x_2)$. The non-zero-divisors of $S/I$ are those elements not in $(x_0,x_1) \cup (x_0,x_2) \cup (x_1,x_2)$. For instance, $f = x_0^{15} + x_1^{10} + x_2^{6}$ is such an element, and in fact there is no homogeneous non-zero-divisor on $M$ of smaller degree than $f$. 
\end{remark}

\begin{example}\label{ex:235}
Let $S=k[x_0,x_1,x_2]$ with degrees $2,3$ and $5$.  One can check that $\mathfrak m$ is Koszul $1$-regular and has depth $1$.  Theorem~\ref{introweighted}(a) thus implies that the maximal degree of a generator of the $i^{\th}$ syzygies of $\mathfrak m$ is $w^{i+1}$; for $i=0,1,2$ this yields bounds of $5,8$ and $10$, respectively.  Each of these bounds is sharp, as the minimal free resolution of $\mathfrak m$ has the form
\[
S(-2)\oplus S(-3)\oplus S(-5) \gets S(-5)\oplus S(-7)\oplus S(-8) \gets S(-10) \gets 0.
\]
\end{example}

Let us now prove Corollaries~\ref{cor:defining equations} and~\ref{cor:truncation}.

\begin{proof}[Proof of Corollary~\ref{cor:defining equations}]
Theorem~\ref{introweighted} implies that $\beta_{1,j}(S/I_X) = 0$ for
\[
j \ge r+w^{1+\depth(M)} - w^{\depth(M)-1}. 
\]
Since $S/I_X$ has depth at least $1$, and $w^2 \ge w^{i+1}-w^{i-1}=d_{i}+d_{i-1} $ for all $i \ge 1$, 
we conclude that $\beta_{1,j}(S/I_X) = 0$ for $j \ge r + w^2$, i.e. $I_X$ is generated in degrees $<r+w^2$.
\end{proof}
Our proof of Corollary~\ref{cor:truncation} requires the following lemma.  
\begin{lemma}
\label{hightrunc}
Let $M$ be a finitely generated, graded $S$-module. The truncation $M_{\ge r}$ is Koszul $r$-regular for $r \gg 0$. 
\end{lemma}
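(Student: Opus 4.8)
The plan is to compare the local cohomology of the truncation $M_{\ge r}$ with that of $M$ itself, exploiting that the complementary piece $M_{<r} \coloneqq M/M_{\ge r}$ is a finite-length module. Since $M$ is finitely generated it is bounded below in degree and has finite-dimensional graded components, so $M_{<r}$ is a finite-dimensional $k$-vector space and hence $\fm$-torsion; consequently $H^0_\fm(M_{<r}) = M_{<r}$ while $H^i_\fm(M_{<r}) = 0$ for every $i \ge 1$. Feeding the short exact sequence $0 \to M_{\ge r} \to M \to M_{<r} \to 0$ into the long exact sequence of local cohomology then gives $H^i_\fm(M_{\ge r}) \cong H^i_\fm(M)$ for all $i \ge 2$, together with the exact sequence
\[
0 \to H^0_\fm(M_{\ge r}) \to H^0_\fm(M) \to M_{<r} \to H^1_\fm(M_{\ge r}) \to H^1_\fm(M) \to 0.
\]

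Next I would fix an integer $\rho$ such that $H^i_\fm(M)_d = 0$ for all $i \ge 0$ and all $d \ge \rho$; such a $\rho$ exists because $M$ is finitely generated (for instance, by Local Duality each $H^i_\fm(M)$ is the graded $k$-dual of the finitely generated module $\Ext^{n+1-i}_S(M, S(-w))$ and hence vanishes in high degrees --- this is exactly the input used in the proof of Theorem~\ref{technical weighted}(b)). The claim is that $M_{\ge r}$ is Koszul $r$-regular, i.e.\ $H^i_\fm(M_{\ge r})_d = 0$ whenever $d \ge r - w^{i-1}$, as soon as $r \ge \rho + w$ (with $w \coloneqq w^{n+1}$) and $r$ exceeds the top degree in which $H^0_\fm(M)$ is nonzero (a vacuous condition if $H^0_\fm(M) = 0$).

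The verification then breaks into three ranges of $i$. For $i \ge 2$ one has $H^i_\fm(M_{\ge r})_d = H^i_\fm(M)_d$, which vanishes once $d \ge \rho$; and since $w^{i-1} \le w$, the hypothesis $d \ge r - w^{i-1}$ already forces $d \ge r - w \ge \rho$. For $i = 0$: because $r$ exceeds the top degree of the finite-length module $H^0_\fm(M)$, the submodule $\Gamma_\fm(M) \subseteq M$ meets $M_{\ge r}$ only in $0$, so $H^0_\fm(M_{\ge r}) = \Gamma_\fm(M) \cap M_{\ge r} = 0$ and there is nothing to check (recall $r - w^{-1} = r + 1$). For $i = 1$: the exact sequence above exhibits $H^1_\fm(M_{\ge r})$ as an extension of $H^1_\fm(M)$ by a quotient of $M_{<r}$; the first vanishes in degrees $\ge \rho$ and the second is concentrated in degrees $< r$, so $H^1_\fm(M_{\ge r})_d = 0$ for $d \ge r = r - w^0$. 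These cases cover all $i$, so $M_{\ge r}$ is Koszul $r$-regular for all $r \ge \max\{\rho + w,\ 1 + \max\{d : H^0_\fm(M)_d \ne 0\}\}$, which proves the lemma.

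I do not anticipate a genuine obstacle here: this is a routine truncation argument. The two points requiring a little attention are the $i = 1$ case, where $H^1_\fm(M_{\ge r})$ honestly differs from $H^1_\fm(M)$ and one must observe that the discrepancy lives entirely in degrees $< r$, and the extraction of a uniform high-degree vanishing bound $\rho$ for the local cohomology of $M$ --- which is precisely the place where finite generation of $M$ is used.
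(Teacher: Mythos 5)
Your proof is correct. It reaches the same three-case analysis as the paper's proof ($i\ge 2$, $i=1$, $i=0$), but packages the key input differently: the paper identifies $H^i_\fm(M_{\ge r}(r))_d$ with the sheaf cohomology $H^i(X,\widetilde M(d+r))$ for $i>1$ and invokes Serre vanishing, whereas you stay entirely algebraic, comparing $M_{\ge r}$ with $M$ via the sequence $0\to M_{\ge r}\to M\to M/M_{\ge r}\to 0$ and using that the local cohomology of a finitely generated module vanishes in all sufficiently high degrees (via local duality). These are two phrasings of the same fact, so the substance is the same; your version has the minor advantages of avoiding sheaves altogether and of producing an explicit threshold $r\ge\max\{\rho+w,\,1+\max\{d: H^0_\fm(M)_d\ne 0\}\}$, while the paper's version makes the connection to the geometric setup (Theorem~\ref{tatethm}) used elsewhere more transparent. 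Your treatment of $i=1$ --- exhibiting $H^1_\fm(M_{\ge r})$ as an extension of $H^1_\fm(M)$ by a quotient of the finite-length module $M/M_{\ge r}$, which lives only in degrees $<r$ --- is a clean substitute for the paper's remark that $H^1_\fm(M_{\ge r}(r))$ is supported in negative degrees.
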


\begin{proof}
We will show $M_{\ge r}(r)$ is Koszul 0-regular for $r \gg 0$. For $i > 1$, we have $H^i_{\fm}(M_{\ge r}(r))_d = H^i(X, \widetilde{M}(d + r))$. It therefore follows from (the weighted version of) Serre Vanishing that we can choose $r \gg 0$ so that, for any $i > 1$, we have $H^i_{\fm}(M_{\ge r}(r))_d = 0$ for $d \ge -w^{i -1}$. Since $H^0_{\fm}(M)$ has finite length, we may also choose $r \gg 0$ such that $H^0_\fm(M_{\geq r}(r))=0$, in which case $H^1_\fm(M_{\geq r}(r))$ is supported entirely in negative degrees.
\end{proof}

\begin{proof}[Proof of Corollary~\ref{cor:truncation}]
Let $K$ denote the Koszul complex on the variables $x_0, \dots, x_n$. Since the minimal degree of a generator of $M$ is $0$, the minimal degree of an element of $\Tor_i^S(M, k) = H_i(M \otimes_S K)$ is $w_i$; this yields the lower bound on $a$.  The upper bound follows immediately from Theorem~\ref{introweighted}(a) and Lemma~\ref{hightrunc}.
\end{proof}

\section{An application to Betti numbers over Cox rings of toric varieties}
\label{subsec:multigraded betti} 

Let $X$ be a simplicial, projective toric variety. In this section, we let $S = k[x_0, \dots, x_n]$ denote the $\Cl(X)$-graded Cox ring of $X$ and $B \subseteq S$ the irrelevant ideal of $X$. Our next goal is to prove a version of Theorem~\ref{introweighted} for $\Cl(X)$-graded $S$-modules; the idea is to use the theory of primitive collections to reduce to the $\Z$-graded case. 

We recall that a \defi{primitive collection} for $X$ may be described algebraically as a subset of the variables $x_0, \dots, x_n$ generating an associated prime of $B$. Primitive collections were first studied by Batyrev in~\cite{batyrev}; we refer the reader to \cite[Definition 5.1.5]{CLS} for background. Our assumption that $X$ is simplicial and projective ensures that we may apply the theory of primitive collections in our setting. If $I \subseteq \{x_0, \dots, x_n\}$ is a primitive collection, then we can use \cite[p. 305]{CLS} to define a homomorphism $\deg_I\colon \Cl(X)\to \ZZ$.  More specifically, each primitive collection $I$ induces, via \cite[Definition~6.4.10 and (6.4.8)]{CLS}, a coefficient vector $(b'_0, b'_1, \dots, b'_n)\in \QQ^{n+1}$.  By minimally clearing denominators, we obtain $(b_0,b_1 \dots, b_n)\in \ZZ^{n+1}$, and we define $\deg_I(x_i)=b_i$; this gives a well-defined map $\Cl(X)\to \ZZ$ by the exactness of the sequence in~\cite[(6.4.1)]{CLS}. We have $\deg_I(x_j)>0$ for $x_j \in I$ and $\deg_I(x_j) \leq 0$ for $x_j \notin I$. In particular, given a primitive collection $I$, the map $\deg_I$ makes $S_I = S / \langle x_i \notin I \rangle$ into a positively $\ZZ$-graded ring.

\begin{example}\label{ex:prim collections}
Let $X$ be the Hirzebruch surface of type $3$. The Cox ring of $X$ is $S = k[x_0, \dots, x_3]$, with $\Z^{ 2}$-grading given by $\deg(x_0) = (1,0) = \deg(x_2)$, $\deg(x_1) = (-3, 1)$, and $\deg(x_3) = (0,1)$. The irrelevant ideal of $X$ is $(x_0, x_2) \cap (x_1, x_3)$. There are therefore two primitive collections for $X$: $\{x_0,x_2\}$ and $\{x_1,x_3\}$. The map $\deg_{\{x_0,x_2\}}$ (resp. $\deg_{\{x_1,x_3\}}$) is projection onto the first (resp. second) coordinate.
\end{example}

We introduce the following notation:

\begin{notation}
\label{Ikoszul}
Let $I$ be a primitive collection. We set
$$
w^{j}_I = \begin{cases}
\max\{ \sum_{x_i\in I'} \deg_I(x_i) \text{ : } I'\subseteq I \text{ and } \#I' = j\}, & j < \#I; \\
\sum_{x_i\in I} \deg_I(x_i), & j\geq \#I.
\end{cases}
$$  
\end{notation}

The following toric analogue of Theorem~\ref{introweighted} is the main result of this section:

\begin{thm}\label{introthm:truncated res general ALT}
Let $X$ be a simplicial, projective toric variety with Cox ring $S$ and irrelevant ideal $B$. Let $M$ be a $\Cl(X)$-graded $S$-module, and assume $H_B^0(M) = 0$. Fix a primitive collection $I$, and let $P_I$ denote the corresponding minimal prime of $B$.   We have the following\footnote{
Just as in Theorem \ref{introweighted}(a), the bounds on Betti numbers appearing in Theorem \ref{introthm:truncated res general ALT} may be tightened by considering the depth of $M_I$ for each primitive collection $I$. 
}:
\[
\begin{matrix}
\text{If } H^i_{P_I}(M)_a = 0 \\
 \text{ for $i > 0$ and all degrees}\\
 a\in \Cl(X) \text{ where }  \deg_I(a) \geq -w^{i - 1}_I,
\end{matrix}
\qquad 
\Rightarrow
\qquad 
\begin{matrix}
\text{then } \beta_{i,a}(M) =0 \\
 \text{ for $i\geq 0$ and all degrees}\\
 a\in \Cl(X) \text{ where }  \deg_I(a) \geq w^{i+1}_I.
\end{matrix}
\]
\end{thm}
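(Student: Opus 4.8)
The strategy is to reduce Theorem~\ref{introthm:truncated res general ALT} to the already-proven $\Z$-graded statement, Theorem~\ref{technical weighted}(a) (equivalently Theorem~\ref{introweighted}), applied to a suitable $\Z$-graded module built from $M$ and the chosen primitive collection $I$. Fix the primitive collection $I$ and set $M_I = M \otimes_S S_I$, where $S_I = S/\langle x_i \notin I\rangle$ is the positively $\Z$-graded ring with $\deg(x_i) = \deg_I(x_i)$ for $x_i \in I$; write $n_I + 1 = \#I$, so that $S_I$ is a polynomial ring in $n_I+1$ variables with all degrees positive. The plan is: (i) translate the cohomological hypothesis on $H^i_{P_I}(M)$ into the statement that $M_I$ is Koszul $0$-regular (after the evident normalization $r=0$) as an $S_I$-module; (ii) observe that, because $P_I$ is minimal over $B$ and $I$ is a primitive collection, the local cohomology $H^i_{P_I}(M)$ agrees (in the relevant range) with $H^i_{\fm_I}(M_I)$ computed over $S_I$, where $\fm_I$ is the homogeneous maximal ideal of $S_I$ — here one uses that localizing/restricting along the primitive collection kills exactly the variables outside $I$, and that $H^0_B(M)=0$ controls the low-degree part; (iii) apply Theorem~\ref{technical weighted}(a) to $M_I$ over $S_I$ to conclude $\Tor^{S_I}_i(M_I,k)_b = 0$ for $b \ge w^{i+\depth(M_I)}_I - w^{\depth(M_I)-1}_I$, and in particular (dropping the depth refinement, as the footnote allows) for $b \ge w^{i+1}_I$; (iv) compare $\Tor^{S_I}_i(M_I,k)$ with $\Tor^S_i(M,k)$ to transport this vanishing back to a vanishing of $\beta_{i,a}(M)$ in the degrees $a$ with $\deg_I(a) \ge w^{i+1}_I$.

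\textbf{Key steps in order.} First I would set up the notation carefully: record that $\deg_I$ is a group homomorphism $\Cl(X)\to\Z$ with $\deg_I(x_j)>0$ for $x_j\in I$ and $\deg_I(x_j)\le 0$ otherwise, so that the Koszul-regularity invariants $w^j_I$ of Notation~\ref{Ikoszul} are exactly the invariants $w^j$ of Notation~\ref{wi} for the ring $S_I$. Second, I would prove the local cohomology comparison $H^i_{P_I}(M)_a = H^i_{\fm_I}(M_I)_a$ for the degrees in play; the cleanest route is to note $\sqrt{P_I} = \langle x_i \notin I\rangle$'s complement is wrong — rather $P_I$ is generated by $\{x_i : x_i \in I\}$ as an associated prime of $B$, so that $S/P_I$ is the polynomial ring in the variables \emph{not} in $I$, and the Čech complex on $I$ computing $H^\bullet_{P_I}$ of an $S$-module, after applying $-\otimes_S S_I$ and tracking gradings via $\deg_I$, becomes the Čech complex on $S_I$ computing $H^\bullet_{\fm_I}$; one must check that the ``$\otimes_S S_I$'' does not create higher Tor contributions in the range of degrees of interest, which is where $H^0_B(M)=0$ and a Serre-type positivity estimate enter. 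Third, reinterpret the hypothesis: $H^i_{P_I}(M)_a=0$ for all $a$ with $\deg_I(a)\ge -w^{i-1}_I$ becomes precisely ``$M_I$ is Koszul $0$-regular over $S_I$'' (Definition~\ref{defn:0 reg}), since the degree condition $d \ge -w^{i-1}$ there is the image under $\deg_I$ of the condition here. Fourth, invoke Theorem~\ref{technical weighted}(a) for $M_I$ (its hypothesis $H^0_{\fm_I}(M_I)_j=0$ for $j\ll 0$ follows from $H^0_B(M)=0$ together with the comparison), obtaining $\Tor^{S_I}_i(M_I,k)_b=0$ for $b\ge w^{i+1}_I$. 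Fifth, and finally, transport: by base change for Tor there is a spectral sequence (or, more simply, a surjection from a Koszul-type argument) relating $\Tor^S_i(M,k)$ and $\Tor^{S_I}_i(M_I,k)$ — since $k = S/\fm$ and $S_I = S/\langle x_i\notin I\rangle$, and the $x_i\notin I$ form a regular sequence on $S$, one has $\Tor^S_\bullet(M,k)$ computed from a free $S$-resolution of $M$ tensored with $K^S$ (the Koszul complex on all variables), while $\Tor^{S_I}_\bullet(M_I,k)$ uses $K^{S_I}$; the key point is that a minimal free $S$-resolution of $M$ has its $i$-th term generated in $\Cl(X)$-degrees $a$, and $\deg_I$-grading each such $a$ lets us compare. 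The cleanest formulation: $\beta^S_{i,a}(M) \ne 0$ forces $\Tor^{S_I}_i(M_I,k)$ to be nonzero in $\deg_I$-degree $\deg_I(a)$, possibly after accounting for a shift bounded by the degrees of the $x_i \notin I$, which are $\le 0$ and hence only help. This gives $\deg_I(a) < w^{i+1}_I$ whenever $\beta_{i,a}(M)\ne 0$, which is the contrapositive of the claim.

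\textbf{Main obstacle.} The substantive difficulty is Step~two/five: making precise the passage between $S$-module data graded by $\Cl(X)$ and $S_I$-module data graded by $\Z$, i.e. showing that restricting to the primitive collection $I$ (killing the variables outside $I$) faithfully transports both the local cohomology hypothesis \emph{and} the Betti-number conclusion, in the correct ranges of degrees. One has to be careful that $M_I = M\otimes_S S_I$ may differ from $M$ in ways that affect $H^0$ and $H^1$, and that $\deg_I$ only sees a $\Z$-quotient of $\Cl(X)$ so several distinct $a\in\Cl(X)$ collapse to the same integer; the argument must show no spurious nonvanishing is introduced. I expect this to be handled by a careful Čech/Koszul bookkeeping argument exactly parallel to — but one notch more intricate than — the bookkeeping in the proof of Theorem~\ref{technical weighted}, together with the hypothesis $H^0_B(M)=0$ and Serre vanishing to control the finitely many low-$\deg_I$-degrees. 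The depth refinement mentioned in the footnote is then a cosmetic improvement, obtained by carrying $\depth(M_I)$ through Theorem~\ref{technical weighted}(a) instead of bounding it below by $1$.
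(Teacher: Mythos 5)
Your overall architecture matches the paper's: fix the primitive collection $I$, identify the $w^j_I$ with the invariants $w^j$ of the positively $\Z$-graded ring $S_I$, translate the hypothesis on $H^i_{P_I}(M)$ into Koszul $0$-regularity over $S_I$, apply Theorem~\ref{technical weighted}(a), and transport the vanishing back to $\beta_{i,a}(M)$ using the fact that the variables outside $I$ have non-positive $\deg_I$-degree. But there is a genuine gap at the crux you yourself flag (your steps two and five): you take $M_I = M\otimes_S S_I$, i.e.\ the quotient $M/\langle x_j\notin I\rangle M$, whereas the correct object is $M$ with scalars \emph{restricted} along the inclusion $S_I = k[x_i : x_i\in I]\into S$. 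This distinction is fatal in both directions. For local cohomology, $H^i_{P_I}(M)$ is computed by the \v{C}ech complex on $\{x_i : x_i\in I\}$ applied to all of $M$, and this literally equals $H^i_{\fm_I}$ of $M$ restricted to $S_I$ (with grading collapsed by $\deg_I$); it is \emph{not} $H^i_{\fm_I}(M\otimes_S S_I)$, and no amount of Serre vanishing or the hypothesis $H^0_B(M)=0$ repairs that, since the discrepancy is not confined to $H^0$ and $H^1$. For the Tor comparison, your step five is simply false with your $M_I$: take $X=\PP^1\times\PP^1$, $S=k[x_0,x_1,y_0,y_1]$, $I=\{x_0,x_1\}$, $M=S/(x_0y_0)$. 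Then $M\otimes_S S_I\cong k[x_0,x_1]$ is $S_I$-free, so $\Tor^{S_I}_1(M\otimes_S S_I,k)=0$ in every degree, yet $\beta^S_{1,(1,1)}(M)\ne 0$; so nonvanishing of $\beta^S_{i,a}(M)$ does not force nonvanishing of $\Tor^{S_I}_i(M\otimes_S S_I,k)$ in any shifted degree. The change-of-rings spectral sequence for the surjection $S\onto S_I$ that you invoke has uncontrolled higher terms $\Tor^{S_I}_p(\Tor^S_q(M,S_I),k)$ with $q>0$, about which the hypothesis says nothing.

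The paper's fix is exactly to use restriction of scalars. Writing $J$ for the complement of $I$, one views $M\otimes_S K$ (Koszul complex on all variables) as the totalization of $(M\otimes_S K_I)\otimes_S K_J$; the resulting spectral sequence, together with flat base change along the \emph{inclusion} $S_I\into S$ (which gives $\Tor^S_*(M,S_J)\cong\Tor^{S_I}_*(M|_{S_I},k)$), shows that $\Tor^S_*(M,k)$ is a subquotient of $\Tor^{S_I}_*(M|_{S_I},k)\otimes_k\om_{E_J}$, and the non-positivity of $\deg_I$ on $\om_{E_J}$ supplies the ``shift that only helps.'' Note also that $M|_{S_I}$ is typically not finitely generated over $S_I$ — this is precisely why Theorem~\ref{technical weighted}(a) is stated for arbitrary graded modules with the hypothesis $H^0_{\fm}(M)_j=0$ for $j\ll 0$ (here guaranteed by $H^0_B(M)=0$, since $B\subseteq P_I$); your finitely generated $M\otimes_S S_I$ obscures this point.
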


Put more simply, Theorem \ref{introthm:truncated res general ALT} says that, if $M$ is an $S$-module satisfying appropriate positivity conditions, then the multigraded Betti numbers of $M$ must lie within a particular polytope.  Similar ideas have been appeared in~\cite{SVTW} and elsewhere.  For instance, results from~\cite{BES,BHS,EES} give analogues of linear resolutions for truncations and \cite{BC,chardin-holanda} give bounds on Betti numbers, at least in the case where $X$ is a product of projective spaces.  A fairly general result in this direction is~\cite[Theorem 1.5(2)]{MS}, but this result addresses the structure of a (potentially infinite) virtual resolution and does not yield specific results about Betti numbers.

To prove Theorem~\ref{introthm:truncated res general ALT}, we will need the following technical lemma. Given an $S$-module $M$ and a primitive collection $I$, let $M_I$ denote the module $M$ considered as an $S_I$-module.

\begin{lemma}
\label{technical}
Let $M$ be a graded $S$-module, $I \subseteq \{0, \dots, n\}$ a primitive collection, and $J$ the complement of $I$. The $\Cl(X) \oplus \Z$-graded $k$-vector space $\Tor_*^S(M, k)$ is a subquotient of $\Tor_*^{S_I}(M_I, k) \otimes_k \om_{E_J}$.
\end{lemma}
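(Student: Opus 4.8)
The plan is to prove Lemma~\ref{technical} by resolving $M$ as an $S$-module in two stages, using the tensor decomposition $S = S_I \otimes_k k[x_j : j \in J]$. First I would observe that since $S_I = S/\langle x_j : j\in J\rangle$, we may compute $\Tor^S_*(M,k)$ by first taking a free resolution of $k$ over $k[x_j : j\in J]$—namely the Koszul complex $K_J$ on the variables $x_j$, $j\in J$—and then base-changing. Concretely, $k \simeq K_J$ as a complex of $k[x_j:j\in J]$-modules, hence $S \otimes_{k[x_j:j\in J]} K_J$ is a free resolution of $S_I$ over $S$. This gives a change-of-rings spectral sequence, or more directly the isomorphism
\[
\Tor^S_*(M, S_I) \cong H_*\big(M \otimes_S (S \otimes_{k[x_j:j\in J]} K_J)\big) \cong H_*(M \otimes_{k[x_j:j\in J]} K_J),
\]
which one checks is $\Tor^{k[x_j:j\in J]}_*(M, k)$; in particular if $M$ is flat (or free) over $k[x_j:j\in J]$ this $\Tor$ is concentrated in homological degree $0$, but in general we only need a bound, not exactness.

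The cleanest route is the standard base-change spectral sequence for the ring map $k[x_j:j\in J] \to S \to S_I$ together with $\Tor^S(-,k) = \Tor^S(-, S_I \otimes_{S_I} k)$: there is a spectral sequence
\[
E^2_{p,q} = \Tor^{S_I}_p\!\big(\Tor^{k[x_j:j\in J]}_q(M, k),\, k\big) \;\Longrightarrow\; \Tor^{S}_{p+q}(M, k).
\]
Since any subquotient of $\Tor^{k[x_j:j\in J]}_q(M,k)$ embeds in $\big(\bigoplus M_a\big)\otimes_k (K_J)_q$-type pieces, and since $(K_J)_q = \Lambda^q(k^J)$ carries exactly the grading of $\om_{E_J}$ up to the appropriate twist—recall $\deg(e_i) = (-\deg(x_i);-1)$, so $\om_{E_J} = \Lambda_k(e_j : j\in J)$ suitably shifted has graded pieces matching $\Tor^{k[x_j:j\in J]}_*(M,k)$ as an $M_I$-variant—one concludes $\Tor^{k[x_j:j\in J]}_q(M,k)$, and hence all of $\Tor^{k[x_j:j\in J]}_*(M,k)$, is a subquotient of $M_I \otimes_k \om_{E_J}$ as $\Cl(X)\oplus\Z$-graded vector spaces (here $M_I$ records only the module, since the $S_I$-action has been forgotten on the exterior factor). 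Feeding this into the spectral sequence: $E^2_{p,q}$ is a subquotient of $\Tor^{S_I}_p(M_I, k) \otimes_k (\om_{E_J})_q$, the differentials only pass to subquotients, and $E^\infty$ computes a filtration of $\Tor^S_{p+q}(M,k)$; therefore $\Tor^S_*(M,k)$ is a subquotient of $\Tor^{S_I}_*(M_I,k)\otimes_k \om_{E_J}$, which is the claim.

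I expect the main obstacle to be bookkeeping the gradings correctly: one must track the $\Cl(X)$-degree coming from the $x_i$'s and the homological ($\Z$-)degree simultaneously, and verify that the Koszul complex $K_J$ on $\{x_j\}_{j\in J}$ contributes exactly the graded $k$-vector space $\om_{E_J}$ (up to the global twist built into the definition $\om_E = \underline{\Hom}_k(E,k)$) rather than something off by a sign or a shift. A secondary subtlety is justifying that "subquotient" behaves well: both the passage from a module to the homology of a Koszul complex on it, and the passage across the pages of the base-change spectral sequence, only ever produce subquotients, and subquotient-of-subquotient is a subquotient—so the composite statement is legitimate. Once the grading conventions from Section~\ref{BGG} are pinned down, the argument is essentially formal homological algebra, and no finiteness or positivity hypothesis on $M$ is needed for this lemma.
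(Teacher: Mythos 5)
Your overall strategy---decompose $S$ as $S_I\otimes_k k[x_j:j\in J]$ and run a spectral sequence coming from the double complex $M\otimes_S K_I\otimes_S K_J$---is the same starting point as the paper's, but you run the wrong filtration, and this creates a genuine gap at the step where you claim that $E^2_{p,q}=\Tor_p^{S_I}\bigl(\Tor_q^{k[x_j:j\in J]}(M,k),k\bigr)$ is a subquotient of $\Tor_p^{S_I}(M_I,k)\otimes_k(\om_{E_J})_q$. Knowing that $T_q:=\Tor_q^{k[x_j:j\in J]}(M,k)$ is a subquotient of $M_I\otimes_k(\om_{E_J})_q$ tells you nothing about $\Tor_p^{S_I}(T_q,k)$, because $\Tor_p^{S_I}(-,k)$ is not exact and does not carry subquotients to subquotients. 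Concretely, take $S=k[x_0,x_1]$ standard graded (the Cox ring of $\P^1$), $I=\{0\}$, $J=\{1\}$, and $M=S/(x_0-x_1)$. Then $M_I\cong k[x_0]$ is free, so $\Tor_1^{S_I}(M_I,k)=0$; yet $T_0=M/x_1M\cong k$, so $E^2_{1,0}=\Tor_1^{k[x_0]}(k,k)=k\neq 0$. The lemma itself still holds for this $M$---the offending class is accounted for by $\Tor_0^{S_I}(M_I,k)\otimes(\om_{E_J})_1$---but your term-by-term comparison assigns it to the wrong $(p,q)$ slot and would wrongly conclude that it vanishes. This is not the ``secondary subtlety'' you flag about iterated subquotients; it is a failure of the key inequality on which your argument rests.

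The fix is to filter the double complex the other way, which is what the paper does: take homology first in the $K_I$-direction. Since $K_I$ resolves $S_J=S/\langle x_i : i\in I\rangle$ over $S$, and $\Tor_\ast^S(M,S_J)\cong\Tor_\ast^{S_I}(M_I,k)$ by flat base change along $S_I\into S$, the resulting first page is already $\Tor_\ast^{S_I}(M_I,k)\otimes_k\om_{E_J}$ as a bigraded vector space, and the abutment $\Tor_\ast^S(M,k)$ is a subquotient of any page of this convergent (bounded) spectral sequence. No application of $\Tor$ to a subquotient is ever needed. Your preliminary identifications---$K_J$ computing $\Tor^{k[x_j:j\in J]}_\ast(M,k)$, the grading match between $\Lambda(e_j:j\in J)$ and $\om_{E_J}$, the absence of finiteness hypotheses---are all fine; only the order of the filtration needs to change.
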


\begin{proof}
Let $K$ denote the Koszul complex on all the variables in $S$, $K_I$ the Koszul complex on $\{x_i \text{ : } i \in I\}$, and $K_J$ the Koszul complex on $\{x_i \text{ : } i \in J\}$. Think of the tensor product $(M\otimes_S K_I)\otimes_S K_J$ as a bicomplex whose totalization is $M\otimes_S K$. We have a spectral sequence
$
E^1 = \Tor^S_*(M, S_J) \otimes_S K_J \Rightarrow \Tor^S_*(M, k).
$
Notice that $\Tor^S_*(M, S_J) \otimes_S K_J = \Tor^S_*(M, S_J) \otimes_k \om_{E_J}$. Finally, it follows from the change of rings spectral sequence for $\Tor$ associated to the inclusion $S_I \into S$ that there is an isomorphism $\Tor_*^S(M, S_J) \cong\Tor_*^{S_I}(M_I, k)$.
\end{proof}

\begin{proof}[Proof of Theorem \ref{introthm:truncated res general ALT}]
Consider $S_I$ and $M_I$ as $\Z$-graded via $\deg_I : \Cl(X) \to \Z$. Theorem~\ref{technical weighted}(a) implies that $\on{\Tor}_{i}^{S_I}(M_I,k)_{j}\ne 0$ only if $j<w_I^{i+1}$. Since $\om_{E_J}$ is non-positively graded in the $\deg_I$-grading, it follows that 
$$
(\Tor_*^{S_I}(M_I, k) \otimes_k \om_{E_J})_{(a ; j)} = \bigoplus_{b \in \Z} \bigoplus_{ \l = 0}^j  \Tor_\l^{S_I}(M_I, k)_{a - b}\otimes_k (\om_{E_J})_{(b;j - \l)}
$$
is nonzero only if $a \le a - b < w_I^{j+1}$. Now apply Lemma \ref{technical}.
\end{proof}

\begin{example}\label{ex:trun res Hirz}
Let $X$ be a Hirzebruch surface of type $3$. As discussed in Example \ref{ex:prim collections}, there are two primitive collections on $X$, and the corresponding maps $\deg_I : \Cl(X) \to \Z$ correspond to projection onto the first/second coordinate. In both cases, $w^{j}_I\leq 2$ for all $j$. Let $M$ be a $\Cl(X)$-graded $S$-module that satsifies the hypotheses of Theorem \ref{introthm:truncated res general ALT} with respect to both primitive collections $I$. Theorem \ref{introthm:truncated res general ALT} implies that the generators of a minimal free resolution of $M$ lie in the following degrees:
\[
\begin{tikzpicture}[scale = .5]
    \draw[fill, gray=0.1] (1,1)--(1,-3)--(-3,-3)--(-3,1)--(1,1);
    \draw[fill] (0,0) circle [radius=2pt];
    \draw[fill] (0,1) circle [radius=2pt];
    \draw[fill] (0,2) circle [radius=2pt];     
    \draw[fill] (0,3) circle [radius=2pt];
    \draw[fill] (1,0) circle [radius=2pt];
    \draw[fill] (1,1) circle [radius=2pt];
    \draw[fill] (1,2) circle [radius=2pt];     
    \draw[fill] (1,3) circle [radius=2pt];
    \draw[fill] (2,0) circle [radius=2pt];
    \draw[fill] (2,1) circle [radius=2pt];
    \draw[fill] (2,2) circle [radius=2pt];     
    \draw[fill] (2,3) circle [radius=2pt];
    \draw[fill] (3,0) circle [radius=2pt];
    \draw[fill] (3,1) circle [radius=2pt];
    \draw[fill] (3,2) circle [radius=2pt];     
    \draw[fill] (3,3) circle [radius=2pt];
    \draw[fill] (-0,0) circle [radius=2pt];
    \draw[fill] (-0,1) circle [radius=2pt];
    \draw[fill] (-0,2) circle [radius=2pt];     
    \draw[fill] (-0,3) circle [radius=2pt];
    \draw[fill] (-1,0) circle [radius=2pt];
    \draw[fill] (-1,1) circle [radius=2pt];
    \draw[fill] (-1,2) circle [radius=2pt];     
    \draw[fill] (-1,3) circle [radius=2pt];
    \draw[fill] (-2,0) circle [radius=2pt];
    \draw[fill] (-2,1) circle [radius=2pt];
    \draw[fill] (-2,2) circle [radius=2pt];     
    \draw[fill] (-2,3) circle [radius=2pt];
    \draw[fill] (-3,0) circle [radius=2pt];
    \draw[fill] (-3,1) circle [radius=2pt];
    \draw[fill] (-3,2) circle [radius=2pt];     
    \draw[fill] (-3,3) circle [radius=2pt];
    \draw[fill] (0,-0) circle [radius=2pt];
    \draw[fill] (0,-1) circle [radius=2pt];
    \draw[fill] (0,-2) circle [radius=2pt];     
    \draw[fill] (0,-3) circle [radius=2pt];
    \draw[fill] (1,-0) circle [radius=2pt];
    \draw[fill] (1,-1) circle [radius=2pt];
    \draw[fill] (1,-2) circle [radius=2pt];     
    \draw[fill] (1,-3) circle [radius=2pt];
    \draw[fill] (2,-0) circle [radius=2pt];
    \draw[fill] (2,-1) circle [radius=2pt];
    \draw[fill] (2,-2) circle [radius=2pt];     
    \draw[fill] (2,-3) circle [radius=2pt];
    \draw[fill] (3,0) circle [radius=2pt];
    \draw[fill] (3,-1) circle [radius=2pt];
    \draw[fill] (3,-2) circle [radius=2pt];     
    \draw[fill] (3,-3) circle [radius=2pt];
    \draw[fill] (-0,-0) circle [radius=2pt];
    \draw[fill] (-0,-1) circle [radius=2pt];
    \draw[fill] (-0,-2) circle [radius=2pt];     
    \draw[fill] (-0,-3) circle [radius=2pt];
    \draw[fill] (-1,-0) circle [radius=2pt];
    \draw[fill] (-1,-1) circle [radius=2pt];
    \draw[fill] (-1,-2) circle [radius=2pt];     
    \draw[fill] (-1,-3) circle [radius=2pt];
    \draw[fill] (-2,-0) circle [radius=2pt];
    \draw[fill] (-2,-1) circle [radius=2pt];
    \draw[fill] (-2,-2) circle [radius=2pt];     
    \draw[fill] (-2,-3) circle [radius=2pt];
    \draw[fill] (-3,0) circle [radius=2pt];
    \draw[fill] (-3,-1) circle [radius=2pt];
    \draw[fill] (-3,-2) circle [radius=2pt];     
    \draw[fill] (-3,-3) circle [radius=2pt];
    \draw[<->] (-3.5,0)--(3.5,0);
    \draw[<->] (0,-3.5)--(0,3.5);
\end{tikzpicture}
\]
(one should imagine this box extending infinitely down and to the left). If $M$ is also generated in degrees $\geq 0$, then the Betti numbers of $M$ must lie in the polytope
\[
\begin{tikzpicture}[scale = .5]
    \draw[fill, gray=0.1] (1,1)--(1,0)--(0,0)--(-3,1)--(1,1);
    \draw[fill] (0,0) circle [radius=2pt];
    \draw[fill] (0,1) circle [radius=2pt];
    \draw[fill] (0,2) circle [radius=2pt];     
    \draw[fill] (0,3) circle [radius=2pt];
    \draw[fill] (1,0) circle [radius=2pt];
    \draw[fill] (1,1) circle [radius=2pt];
    \draw[fill] (1,2) circle [radius=2pt];     
    \draw[fill] (1,3) circle [radius=2pt];
    \draw[fill] (2,0) circle [radius=2pt];
    \draw[fill] (2,1) circle [radius=2pt];
    \draw[fill] (2,2) circle [radius=2pt];     
    \draw[fill] (2,3) circle [radius=2pt];
    \draw[fill] (3,0) circle [radius=2pt];
    \draw[fill] (3,1) circle [radius=2pt];
    \draw[fill] (3,2) circle [radius=2pt];     
    \draw[fill] (3,3) circle [radius=2pt];
    \draw[fill] (-0,0) circle [radius=2pt];
    \draw[fill] (-0,1) circle [radius=2pt];
    \draw[fill] (-0,2) circle [radius=2pt];     
    \draw[fill] (-0,3) circle [radius=2pt];
    \draw[fill] (-1,0) circle [radius=2pt];
    \draw[fill] (-1,1) circle [radius=2pt];
    \draw[fill] (-1,2) circle [radius=2pt];     
    \draw[fill] (-1,3) circle [radius=2pt];
    \draw[fill] (-2,0) circle [radius=2pt];
    \draw[fill] (-2,1) circle [radius=2pt];
    \draw[fill] (-2,2) circle [radius=2pt];     
    \draw[fill] (-2,3) circle [radius=2pt];
    \draw[fill] (-3,0) circle [radius=2pt];
    \draw[fill] (-3,1) circle [radius=2pt];
    \draw[fill] (-3,2) circle [radius=2pt];     
    \draw[fill] (-3,3) circle [radius=2pt];
    \draw[fill] (0,-0) circle [radius=2pt];
    \draw[fill] (0,-1) circle [radius=2pt];
    \draw[fill] (0,-2) circle [radius=2pt];     
    \draw[fill] (0,-3) circle [radius=2pt];
    \draw[fill] (1,-0) circle [radius=2pt];
    \draw[fill] (1,-1) circle [radius=2pt];
    \draw[fill] (1,-2) circle [radius=2pt];     
    \draw[fill] (1,-3) circle [radius=2pt];
    \draw[fill] (2,-0) circle [radius=2pt];
    \draw[fill] (2,-1) circle [radius=2pt];
    \draw[fill] (2,-2) circle [radius=2pt];     
    \draw[fill] (2,-3) circle [radius=2pt];
    \draw[fill] (3,0) circle [radius=2pt];
    \draw[fill] (3,-1) circle [radius=2pt];
    \draw[fill] (3,-2) circle [radius=2pt];     
    \draw[fill] (3,-3) circle [radius=2pt];
    \draw[fill] (-0,-0) circle [radius=2pt];
    \draw[fill] (-0,-1) circle [radius=2pt];
    \draw[fill] (-0,-2) circle [radius=2pt];     
    \draw[fill] (-0,-3) circle [radius=2pt];
    \draw[fill] (-1,-0) circle [radius=2pt];
    \draw[fill] (-1,-1) circle [radius=2pt];
    \draw[fill] (-1,-2) circle [radius=2pt];     
    \draw[fill] (-1,-3) circle [radius=2pt];
    \draw[fill] (-2,-0) circle [radius=2pt];
    \draw[fill] (-2,-1) circle [radius=2pt];
    \draw[fill] (-2,-2) circle [radius=2pt];     
    \draw[fill] (-2,-3) circle [radius=2pt];
    \draw[fill] (-3,0) circle [radius=2pt];
    \draw[fill] (-3,-1) circle [radius=2pt];
    \draw[fill] (-3,-2) circle [radius=2pt];     
    \draw[fill] (-3,-3) circle [radius=2pt];
    \draw[<->] (-3.5,0)--(3.5,0);
    \draw[<->] (0,-3.5)--(0,3.5);
\end{tikzpicture}
\]
For instance, set $N=S/(x_0x_1)$, and let $M$ be the truncated twist $M\coloneqq N_{\geq (2,3)}(2,3)$.  A direct computation shows that $H^0_B(M) = 0$, and, for either primitive collection $I$ on $X$, $M_I$ is Koszul $0$-regular with respect to the associated $\Z$-grading. The minimal free resolution of $M$ has the form
$
S^{ 6} \gets S(3,-1)^{ 2} \oplus S(0,-1)^{ 3} \oplus S(-1,0)^{ 5} \gets S(2,-1) \oplus S(-1,-1)^{ 3} \gets 0.
$
\end{example}
\bibliographystyle{amsalpha}
\bibliography{Bibliography}

\end{document}